\titleformat{\chapter}{\normalfont\huge}{\thechapter.}{20pt}{\huge\it}
\pgfplotsset{compat=1.18}
\DeclareSIUnit\clight{\text{$c$}} 
\DeclareSIUnit\byte{B}
\definecolor{codegreen}{rgb}{0,0.6,0}
\definecolor{codegray}{rgb}{0.5,0.5,0.5}
\definecolor{codepurple}{rgb}{0.58,0,0.82}
\definecolor{backcolour}{rgb}{0.95,0.95,0.92}
\lstdefinestyle{mystyle}{
    backgroundcolor=\color{backcolour},   
    commentstyle=\color{codegreen},
    keywordstyle=\color{magenta},
    numberstyle=\tiny\color{codegray},
    stringstyle=\color{codepurple},
    basicstyle=\ttfamily\footnotesize,
    breakatwhitespace=false,         
    breaklines=true,                 
    captionpos=b,                    
    keepspaces=true,                 
    numbers=left,                    
    numbersep=5pt,                  
    showspaces=false,                
    showstringspaces=false,
    showtabs=false,
    tabsize=4
}
\let\mathbar\overline
\let\mathhat\widehat
\renewcommand{\exp}[1]{\mathrm{e}^{#1}}
\newcommand*\reals{\mathbb{R}}
\def\from{\colon} 
\newlength\mylen 
\newcommand{\mapsbetween}{\longleftrightarrow\kern - 0.5\mylen\vline height 1.2ex depth -0.0pt\kern0.5\mylen}
\newcommand{\suchthat}{\qq{s.th.}}
\newcommand*{\transpose}[1]{#1^{\mathsf{T}}}
\newcommand*{\inverse}[1]{#1^{-1}}
\newcommand*\actson{\curvearrowright}
\crefname{equation}{}{}
    \theoremstyle{plain}
    \newtheorem{theorem}{Sats}
    \newtheorem*{theorem*}{Sats}
    \newtheorem{proposition}{Proposition}
    \newtheorem*{proposition*}{Proposition}
    \newtheorem{corollary}{Följdsats}[theorem]
    \newtheorem*{corollary*}{Följdsats}
    \newtheorem{lemma}{Lemma}
    \newtheorem*{lemma*}{Lemma}
	\newtheorem{conjecture}{Förmodan}
	\newtheorem*{conjecture*}{Förmodan}
    \theoremstyle{definition}
    \newtheorem{definition}{Definition}
    \newtheorem*{definition*}{Definition}
	\newtheorem*{example}{Exempel}
    \theoremstyle{remark}
	\newtheorem*{remark}{Kommentar}
    \theoremstyle{plain}
    \newtheorem{theorem}{Theorem}
    \newtheorem*{theorem*}{Theorem}
    \newtheorem{proposition}[theorem]{Proposition}
    \newtheorem*{proposition*}{Proposition}
    \newtheorem{corollary}{Corollary}[theorem]
    \newtheorem{corollary*}{Corollary}
    \newtheorem{lemma}[theorem]{Lemma}
    \newtheorem*{lemma*}{Lemma}
	\newtheorem{conjecture*}{Conjecture}
    \theoremstyle{definition}
    \newtheorem{definition}[theorem]{Definition}
    \newtheorem*{definition*}{Definition}
    \theoremstyle{remark}
	\newtheorem*{remark}{Remark}
\newcommand{\todo}[2][]{%
	\IfStrEqCase{#1}{%
        {inline}{{\color{red}#2}}%
	}[%
		\marginnote{\color{red}#2}%
	]
}
\numberwithin{equation}{section}
\title{A homogeneous geometry of low-rank tensors}
\author{Simon Jacobsson\footnote{Department of Computer Science, KU Leuven, Celestijnenlaan 200A - box 2402, Leuven, 3000, Belgium (\href{mailto:simon.jacobsson@kuleuven.be}{simon.jacobsson@kuleuven.be}). ORCiD: \href{https://orcid.org/0000-0002-1181-972X}{0000-0002-1181-972X}}}
\date{2025-12-15}
\begin{document}
\maketitle

\begin{abstract}
	We consider sets of fixed CP, multilinear, and TT rank tensors, and derive conditions for when (the smooth parts of) these sets are smooth homogeneous manifolds.
	For CP and TT ranks, the conditions are essentially that the rank is sufficiently low.
	These homogeneous structures are then used to derive Riemannian metrics whose geodesics are both complete and efficient to compute.
\end{abstract}

\section{Introduction}%
\label{sec:Introduction}

Identifying fixed-rank matrices as quotient manifolds has had very profitable applications in manifold optimization and statistics~\cite{Bonnabel10,Journee10,Bonnabel12,Mishra14,Massart20,Zheng25}.
For example, Vandereycken, Absil, and Vandewalle's~\cite{Vandereycken12} identification of fixed-rank symmetric matrices as a \emph{homogeneous manifold} is particularly useful because it induces a Riemannian geometry with complete geodesics.
A natural question is whether such a construction generalizes to tensors.

Consider the group action that multiplies tensors in each mode by an invertible matrix, which can also be seen as a change of basis in each mode.
While there are several ways to define the \emph{rank} of a tensor, for most of them, the rank is invariant under this action.
There are in general two obstacles to using it to identify sets of fixed-rank tensors as homogeneous manifolds:
\begin{enumerate}
	\item 
	Without modification, the action is not transitive since there is no way to go from tensors on the form $a \otimes a \otimes a + b \otimes b \otimes b$ to $a \otimes a \otimes a + a \otimes b \otimes b$.
	\item
	We need to calculate the stabilizer, which relates to the uniqueness of the corresponding rank decomposition.
	However, this is much more involved for tensors than for matrices.
	The degree of uniqueness, or \emph{identifiability}, of tensor decompositions is often studied from an algebraic geometric perspective, and broader results have to assume some symmetry, small size, or low rank.
	See for example \cite{Chiantini14,Blomenhofer24}.
\end{enumerate}

In this paper, we consider three notions of rank: \emph{canonical polyadic} (CP) rank, multilinear rank, and \emph{tensor train} (TT) rank.
The two obstacles are overcome by:
\begin{enumerate}
	\item
	Restricting to a Zariski open subset where the action is transitive.
	\item
	Restricting to ranks where we understand identifiability.
\end{enumerate}
We are thus able to identify three families of fixed-rank tensor manifolds as smooth homogeneous manifolds.
For each of these, there is a so-called \emph{canonical Riemannian metric}.
We show how the geodesics in this metric can be computed efficiently.

\subsection*{Related work}

As already mentioned, the space of matrices with fixed rank is a homogeneous manifold~\cite{Vandereycken12,Absil15,MuntheKaas15}.

In low-rank approximation, the manifold perspective has been used to study several different fixed-rank tensor spaces.
We mention here CP rank~\cite{Breiding18}, TT rank~\cite{Holtz12,Uschmajew20}, hierarchical Tucker rank~\cite{Uschmajew13}, and multilinear rank~\cite{Koch10}.

In \cite{Uschmajew20}, it is mentioned that \enquote{the set of all tensors with canonical rank bounded by $k$ is typically not closed.
Moreover, while the closure of this set is an algebraic variety, its smooth part is in general not equal to the set of tensors of fixed rank $k$ and does not admit an easy explicit description.}.
We contribute an \enquote{easy explicit description} for a subset of those manifolds.

We also mention that fixed-rank tensor manifolds have, to our knowledge, not previously been equipped with a Riemannian metric with known geodesics, other than in the rank $1$ case~\cite{Swijsen22b,Jacobsson24}.

In Riemannian optimization, line searches and gradient descents are often performed along geodesics, or along approximate geodesics, called \emph{retractions}.
As already mentioned, on any homogeneous manifold there is a distinguished Riemannian metric called the canonical metric.
This metric has two big advantages: geodesics on a quotient can be described as geodesics on its numerator; and those geodesics are always complete, meaning they can be extended to arbitrary length.
Similarly, a retraction on the quotient can be induced by a retraction on the numerator.
Riemannian optimization using the canonical metric has been explored, for example, on the Grassmann~\cite{Helmke07,Sato14,Boumal15,Bendokat24}, Stiefel~\cite{Edelman98,Li20,Gao22}, and symplectic Stiefel~\cite{Gao21,Bendokat21} manifolds.

\emph{Lie group integrators} are a class of numerical integrators for ordinary differential equations (ODEs) on manifolds~\cite{Iserles00,Christiansen11,Blanes09}.
They work by replacing the manifold ODE with an appropriate Lie group ODE, and identifying the underlying manifold as homogeneous informs the choice of Lie group and the design of the integrator~\cite{MuntheKaas97,MuntheKaas99a,%
Celledoni03,Malham08,MuntheKaas14,MuntheKaas15}.

\subsection*{Structure of the paper}

The main results are \cref{thm:cp_quotient,thm:tt_quotient,thm:tucker_quotient}.

\Cref{sec:Preliminaries} briefly recaps the concepts from three different fields of study---homogeneous spaces, multilinear algebra, and algebraic geometry---that we use to construct our homogeneous structure.
\Cref{sec:CP tensors,sec:Tucker tensors,sec:Tensor trains} apply these to sets of tensors with fixed CP rank, multilinear rank, and TT rank respectively.
The constructions are similar to each other, and the basic results are repeated without proof in \cref{sec:Tucker tensors,sec:Tensor trains} after having been introduced in detail in \cref{sec:CP tensors}.

\subsection*{Acknowledgements}

This research was funded by BOF project C16/21/002 by the Internal Funds KU Leuven and FWO project G080822N, and by FWO travel grant K243425N.

I am grateful to my PhD examination committee, who read and gave feedback on parts of this text that were in my thesis.
I am also grateful to Nick Vannieuwehoven (also committee member) and Tim Seynnaeve for fruitful discussions about algebraic geometry.

\section{Preliminaries}%
\label{sec:Preliminaries}

Let $d \geq 3$ and let $n_1$, \dots, $n_d$ be integers $\geq 2$.
If $\mathrm{GL}(n)$ denotes the real group of $n \times n$ invertible matrices, then
\begin{align}
	G = \mathrm{GL}(n_1) \times \dots \times \mathrm{GL}(n_d)
\end{align}
has a natural action on tensor on the set of tensors $\reals^{n_1} \otimes \dots \otimes \reals^{n_d}$ via
\begin{align}
	(g_1, \dots, g_d) \cdot (v_1 \otimes \dots \otimes v_d) = (g_1 v_1) \otimes \dots \otimes (g_d v_d).
\end{align}
This is the Lie group action we will use to construct our manifolds.

We will write $G \actson M$ for \enquote{the action of $G$ on $M$}.

\subsection{Homogeneous spaces}%
\label{sub:Homogeneous spaces}

A \emph{homogeneous space} is a quotient $A / B$ of Lie groups, along with some manifold structure that is compatible with the projection $\pi \from A \to A / B$, $a \mapsto a B$.
Lee~\cite[chapters 7 and 21]{Lee13} is a standard introduction to smooth homogeneous spaces and O'Neill~\cite[chapter 11]{Oneill83} is a standard introduction to Riemannian homogeneous spaces.

Fix an element $T \in \reals^{n_1} \otimes \dots \otimes \reals^{n_d}$ and define its \emph{orbit}
\begin{align}
	G \cdot T = \set{g \cdot T \suchthat g \in G},
\end{align}
and its \emph{stabilizer} (or \emph{isotropy subgroup})
\begin{align}
	H = \set{g \in G \suchthat g \cdot T = T}.
\end{align}
There is a unique smooth manifold structure on $G \cdot T = G / H$ such that, for all $f$, $f \from G \cdot T \to \reals$ is smooth if and only if $f \circ \pi \from G \to \reals$ is smooth.
Then $\pi$ is called a \emph{smooth submersion}.

Similarly, there is a natural Riemannian structure on $G \cdot T$ defined via $G / H$.
First, consider the Euclidean inner product on $G$'s algebra $\mathfrak{g}$, and translate this inner product to a right-invariant Riemannian metric on $G$.
Second, for every $g \in G$, we can demand that $\dd{\pi}_g \from T_g G \to T_{\pi(g)} (G \cdot T)$ preserves the length of vectors that are orthogonal to the fiber $\inverse{\pi}(\pi(g))$~\cite[section 1.2.2]{Petersen06}.
Such vectors are called \emph{horizontal}, and $\pi$ is then called a \emph{Riemannian submersion}.
The resulting metric on $G / H$ is called the \emph{canonical metric}.

Notably, geodesics in $G$ whose initial velocity is horizontal will keep having horizontal velocity, allowing us to define \emph{horizontal geodesics}.
From this also follows a one-to-one correspondence between horizontal geodesics in $G$ through $g$ and geodesics in $G \cdot T$ through $\pi(g)$~\cite[proposition 3.31]{Cheeger08}.
In terms of the manifold exponential, we can write
\begin{align}
	\operatorname{exp}_{\pi(g)}(\dd{\pi}_g X) = \pi(\operatorname{exp}_g(X))
\end{align}
when $X \in T_g G$ is horizontal.
This is useful because it allows us to lift geodesics in $G \cdot T$ to geodesics in $G$.

\subsection{Algebraic geometry}%
\label{sub:Algebraic geometry}

To derive quotients, we will need some results from algebraic geometry.
Landsberg~\cite{Landsberg12} is a good reference for the algebraic perspective on tensors.
Here, we just introduce a few concepts that will be useful later.
An \emph{Algebraic variety} is a space that is a solution set to some algebraic equation.
Especially, putting an upper bound on the rank of tensors is an algebraic condition and thus yields an algebraic variety.
The link between homogeneous spaces and algebraic geometry is that the orbits $G \cdot T$ are open and dense subsets of such varieties.
Particularly, they are open subsets in the \emph{Zariski topology}, where the closed sets are algebraic varieties.

\subsection{Multilinear algebra}%
\label{sub:Multilinear algebra}

Tensors are high-dimensional objects, and working with them in practice often requires using some \emph{decomposition}.
Kolda and Bader~\cite{Kolda09} survey the CP and Tucker decompositions, and Oseledets~\cite{Oseledets11} introduces tensor trains.
Graphically, tensor decompositions can be thought of as \emph{Penrose diagrams}.
For example, \cref{fig:svd} shows a compact SVD decomposition.
The idea is that when the dimensions of the inner edges are small enough, then the decomposed tensor is cheaper to store and to operate on.

\begin{figure}[h]
	\centering
	\includegraphics{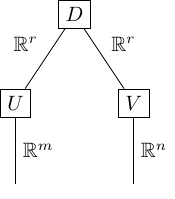}
	\caption{%
	Penrose diagram for the compact SVD decomposition, $U D \transpose{V}$, of an $m \times n$ rank $r$ matrix.
	Edges are labeled with the vector space they represent.%
	}
	\label{fig:svd}
\end{figure}

\subsubsection{CP tensors}%
\label{ssub:CP tensors}

\begin{definition}
	The \emph{CP rank} (often just \emph{rank}) of a tensor $T \in \reals^{n_1} \otimes \dots \otimes \reals^{n_d}$ is the smallest $r$ such that $T$ is a sum of $r$ rank $1$ tensors:
	\begin{align}\label{eq:cp_decomposition}
		T = \sum_{j = 1}^{r} v_{1}^{j} \otimes \dots \otimes v_{d}^{j},
	\end{align}
	where $v_{i}^{j} \in \reals^{n_i}$.
	Such an expression is called a \emph{CP decomposition}.
	It is illustrated in \cref{fig:cpd}.
\end{definition}

\begin{figure}[h]
	\centering
	\includegraphics{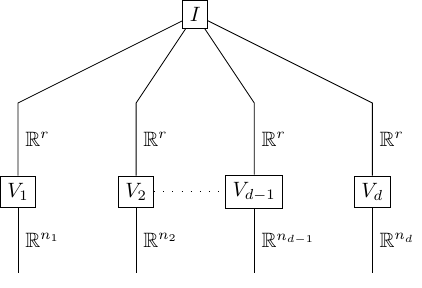}
	\caption{%
	Penrose diagram for the CP decomposition.
	Edges are labeled with the vector space they represent, and $V_i = \begin{bmatrix} v_{i}^{1} & \dots & v_{i}^{r} \end{bmatrix}$.
	$I$ is the diagonal tensor $I_{i_1 \dots i_d} = 1$ if $i_1 = \dots = i_d$ else $I_{i_1 \dots i_d} = 0$.%
	}
	\label{fig:cpd}
\end{figure}

\subsubsection{Tucker tensors}%
\label{ssub:Tucker tensors}

\begin{definition}
	The \emph{multilinear} (or \emph{Tucker}) rank of a tensor $T \in \reals^{n_1} \otimes \dots \otimes \reals^{n_d}$ is the tuple $(t_1, \dots, t_d)$ with
	\begin{align}\label{eq:tucker_rank}
		t_i = \operatorname{rank}(T \from \reals^{n_i} \to \reals^{n_1} \otimes \cdots \mathhat{\reals^{n_i}} \cdots \otimes \reals^{n_d}).
	\end{align}
\end{definition}

\begin{proposition}\label{prop:tucker_decomposition}
	If $T \in \reals^{n_1} \otimes \dots \otimes \reals^{n_d}$ has multilinear rank $(t_1, \dots, t_d)$ then $T$ can be written as
	\begin{align}\label{eq:tucker_decomposition}
		T_{k_1 \dots k_d} = \sum_{\alpha_1 = 1}^{t_1} \cdots \sum_{\alpha_d = 1}^{t_d} C_{\alpha_1 \dots \alpha_d} (G_1)_{k_1}{}^{\alpha_1} \cdots (G_d)_{k_d}{}^{\alpha_d}.
	\end{align}
	Such an expression is called a \emph{Tucker decomposition}.
	It is illustrated in \cref{fig:tucker_decomposition}.
\end{proposition}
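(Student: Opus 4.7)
The plan is to exhibit subspaces $U_i \subseteq \reals^{n_i}$ of dimension exactly $t_i$ such that $T \in U_1 \otimes \cdots \otimes U_d$. Picking a basis $g_i^1, \dots, g_i^{t_i}$ of each $U_i$ and arranging these as the columns of $G_i \in \reals^{n_i \times t_i}$, the coordinates of $T$ in the induced basis of $U_1 \otimes \cdots \otimes U_d$ define the core tensor $C_{\alpha_1 \dots \alpha_d}$, and expansion in that basis is exactly~\eqref{eq:tucker_decomposition}.

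For each $i$, I would take $U_i \definedas (\ker T^{(i)})^\perp \subseteq \reals^{n_i}$, where $T^{(i)}$ denotes the mode-$i$ linear map from~\eqref{eq:tucker_rank}. Rank--nullity immediately gives $\dim U_i = t_i$.

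The heart of the argument is to establish $T \in U_1 \otimes \cdots \otimes U_d$, which I would prove one mode at a time. Writing $V_j \definedas \reals^{n_j}$, the orthogonal decomposition $V_1 = U_1 \oplus \ker T^{(1)}$ induces a splitting $T = T' + T''$ with $T' \in U_1 \otimes V_2 \otimes \cdots \otimes V_d$ and $T'' \in \ker T^{(1)} \otimes V_2 \otimes \cdots \otimes V_d$. For any $v \in \ker T^{(1)}$, contracting $T$ against $v$ in the first slot vanishes by definition of the kernel, while the corresponding contraction of $T'$ vanishes by orthogonality $\ker T^{(1)} \perp U_1$; hence the contraction of $T''$ vanishes for all $v \in \ker T^{(1)}$. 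Non-degeneracy of the inner product restricted to $\ker T^{(1)}$ then forces $T'' = 0$, so $T \in U_1 \otimes V_2 \otimes \cdots \otimes V_d$. Viewing this refined $T$ through its mode-$2$ map $V_2 \to U_1 \otimes V_3 \otimes \cdots \otimes V_d$, the kernel is unchanged, and the same argument refines $T$ to $U_1 \otimes U_2 \otimes V_3 \otimes \cdots \otimes V_d$. Iterating across all $d$ modes yields $T \in U_1 \otimes \cdots \otimes U_d$.

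The main obstacle is checking that this iterative restriction preserves the ranks of the remaining modes, i.e.\ that each $\dim U_i = t_i$ as originally defined even after the earlier modes have been restricted. This is handled by the observation above: once $T \in U_1 \otimes V_2 \otimes \cdots \otimes V_d$, the image of $T^{(2)}$ already lay inside $U_1 \otimes V_3 \otimes \cdots \otimes V_d$, so its rank is unaffected by the restriction; the same goes mode by mode. The remaining bookkeeping---choosing bases of the $U_i$ and reading off the core $C$---is then routine linear algebra.
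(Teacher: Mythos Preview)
Your proposal is correct and follows essentially the same approach as the paper, whose proof is the single sentence ``use the matrix rank decomposition in each mode''. You have made explicit what that line leaves implicit: the rank decomposition of the mode-$i$ unfolding is precisely the identification of $U_i$ (the mode-$i$ column space, equivalently $(\ker T^{(i)})^\perp$) together with a factorization through it, and your careful check that restricting earlier modes leaves the later kernels unchanged is exactly the bookkeeping needed to iterate.
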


\begin{proof}
	By \cref{eq:tucker_rank}, we can use the matrix rank decomposition in each mode to arrive at \cref{eq:tucker_decomposition}.
\end{proof}

Also, conversely, a tensor admitting a Tucker decomposition with inner dimensions $t_1$, \dots, $t_d$ has multilinear rank at most $(t_1, \dots, t_d)$.
If the inner dimensions and multilinear rank are equal, then the $G_i$ have maximal rank.

If the $G_i$ have maximal rank, we could without loss of generality require that they be orthogonal.
Many authors do this, but the stabilizer is easier to derive if we do not.

\begin{figure}[ht]
	\centering
	\includegraphics{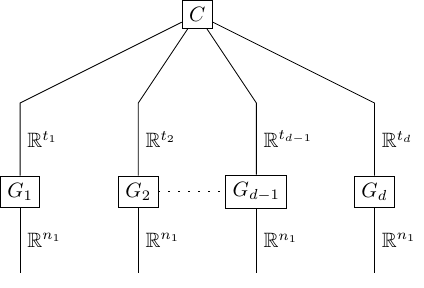}
	\caption{
	Penrose diagram for the Tucker decomposition \cref{eq:tucker_decomposition}.
	Edges are labeled with the vector space they represent.%
	}
	\label{fig:tucker_decomposition}
\end{figure}

\subsubsection{Tensor trains}%
\label{ssub:Tensor trains}

\begin{definition}
	The \emph{TT rank} of a tensor $T \in \reals^{n_1} \otimes \dots \otimes \reals^{n_d}$ is the tuple $(s_1, \dots, s_{d - 1})$ with
	\begin{align}\label{eq:tt_rank}
		s_i = \operatorname{rank}(T \from \reals^{n_1} \otimes \dots \otimes \reals^{n_i} \to \reals^{n_{i + 1}} \otimes \dots \otimes \reals^{n_d}).
	\end{align}
\end{definition}

\begin{proposition}[{Oseledets~\cite[Theorem 2.1]{Oseledets11}}]\label{prop:tt_decomposition}
	If $T \in \reals^{n_1} \otimes \dots \otimes \reals^{n_d}$ has TT rank $(s_1, \dots, s_{d - 1})$ then $T$ can be written as
	\begin{align}\label{eq:tt_decomposition}
		T_{k_1 k_2 \dots k_d} = \sum_{\alpha_1 = 1}^{s_1} \cdots \sum_{\alpha_{d - 1} = 1}^{s_{d - 1}} (F_1)_{k_1 \alpha_1} (F_2)^{\alpha_1}{}_{k_2 \alpha_2} \cdots (F_d)^{\alpha_{d - 1}}{}_{k_d}.
	\end{align}
	Such an expression is called a \emph{TT decomposition}.
	It is illustrated in \cref{fig:tt_decomposition}.
\end{proposition}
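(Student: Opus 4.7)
The plan is to prove the statement by induction on the tensor order $d$. The base case $d = 2$ is exactly the rank factorization of a matrix: a matrix of rank $s_1$ factors as $F_1 F_2$ with $F_1 \in \reals^{n_1 \times s_1}$ and $F_2 \in \reals^{s_1 \times n_2}$, which is precisely \cref{eq:tt_decomposition} for $d = 2$.

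For the inductive step, I would start from the first unfolding, viewing $T$ as a matrix $T^{(1)} \from \reals^{n_1} \to \reals^{n_2} \otimes \dots \otimes \reals^{n_d}$ of rank $s_1$, and take a rank factorization $T^{(1)} = F_1 R$ with $F_1 \in \reals^{n_1 \times s_1}$ of full column rank and $R \in \reals^{s_1 \times (n_2 \cdots n_d)}$ of full row rank. Reinterpreting $R$ as a tensor $\tilde{R} \in \reals^{s_1} \otimes \reals^{n_2} \otimes \dots \otimes \reals^{n_d}$ of order $d - 1$, the identity $T_{k_1 \dots k_d} = \sum_{\alpha_1} (F_1)_{k_1 \alpha_1} \tilde{R}^{\alpha_1}{}_{k_2 \dots k_d}$ holds by construction. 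Applying the induction hypothesis to $\tilde{R}$ decomposes it as $\tilde{R}^{\alpha_1}{}_{k_2 \dots k_d} = \sum_{\alpha_2, \dots, \alpha_{d-1}} (F_2)^{\alpha_1}{}_{k_2 \alpha_2} \cdots (F_d)^{\alpha_{d-1}}{}_{k_d}$, which combined with $F_1$ gives the desired decomposition of $T$.

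The key step, and the main obstacle, is verifying that $\tilde{R}$ actually has TT rank $(s_2, \dots, s_{d - 1})$ so that the induction hypothesis applies with the correct ranks. For each $i \geq 2$, I would argue that the image of the $i$-th unfolding of $T$ equals the image of the $(i - 1)$-th unfolding of $\tilde{R}$ as subspaces of $\reals^{n_{i+1}} \otimes \dots \otimes \reals^{n_d}$. One direction follows immediately from $T_{k_1 \dots k_d} = \sum_{\alpha_1} (F_1)_{k_1 \alpha_1} \tilde{R}^{\alpha_1}{}_{k_2 \dots k_d}$, which expresses every slice of $T$ (in the first $i$ indices) as a linear combination of slices of $\tilde{R}$. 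The reverse direction uses that $F_1$ has full column rank $s_1$, so it admits a left inverse, letting us express each slice of $\tilde{R}$ as a combination of slices of $T$. Consequently the two unfoldings have equal rank $s_i$, and the induction hypothesis delivers exactly the inner dimensions claimed.

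Once the rank bookkeeping is verified, the rest is essentially formal: contracting $F_1$ with the cores produced by induction reproduces \cref{eq:tt_decomposition} with inner dimensions $(s_1, \dots, s_{d-1})$. The argument simultaneously gives a constructive algorithm by repeated application of the SVD or any rank factorization, which is how Oseledets originally phrased it.
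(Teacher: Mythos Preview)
The paper does not supply its own proof of this proposition; it simply attributes the result to Oseledets~\cite[Theorem~2.1]{Oseledets11} and moves on. Your argument is essentially Oseledets' original TT--SVD construction: peel off one mode at a time via a rank factorization of the leading unfolding and recurse. The rank bookkeeping you describe---that the $(i-1)$th unfolding of $\tilde{R}$ has the same image in $\reals^{n_{i+1}} \otimes \dots \otimes \reals^{n_d}$ as the $i$th unfolding of $T$, using a left inverse of $F_1$ for the nontrivial inclusion---is the correct way to make the induction hypothesis apply with the exact ranks $(s_2, \dots, s_{d-1})$, and this is the point Oseledets' proof also hinges on.

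One small notational wobble: you write $\tilde{R} \in \reals^{s_1} \otimes \reals^{n_2} \otimes \dots \otimes \reals^{n_d}$ and then call it ``of order $d - 1$,'' but as written that tensor product has $d$ factors. From the rest of your argument it is clear you mean to merge the $\alpha_1$ and $k_2$ indices into a single first mode $\reals^{s_1 n_2}$, so that the $(i-1)$th unfolding of $\tilde{R}$ lines up with the $i$th unfolding of $T$. Making that grouping explicit would remove the ambiguity; the mathematics is otherwise sound.
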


\begin{figure}[h]
	\centering
	\includegraphics{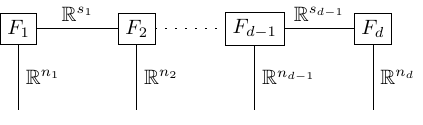}
	\caption{%
	Penrose diagram for the TT decomposition \cref{eq:tt_decomposition}.
	Edges are labeled with the vector space they represent.
	}
	\label{fig:tt_decomposition}
\end{figure}

Note also that a converse of \cref{prop:tt_decomposition} is true, that a TT decomposition with inner dimensions $s_1$, \dots, $s_{d - 1}$ has TT rank at most $(s_1, \dots, s_{d - 1})$.

\section{The CP manifold}%
\label{sec:CP tensors}

\begin{proposition}[Kruskal's theorem]\label{prop:kruskal}
	Let $T \in \reals^{n_1} \otimes \dots \otimes \reals^{n_d}$ be CP rank $r$ with CP decomposition \cref{eq:cp_decomposition}.
	Define $k_i$, for each mode $i$, so that any size $k_i$ subset of the factors $v_{i}^{1}$, \dots, $v_{i}^{r}$ is independent.
	If
	\begin{align}\label{eq:kruskals_condition}
		2 r + d - 1 \leq \sum_{i = 1}^{d} k_i,
	\end{align}
	then the decomposition is unique up to permutation of the terms and rescaling the factors.
\end{proposition}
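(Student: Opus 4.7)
The plan is to reduce to the case $d = 3$ and then invoke Kruskal's combinatorial argument, the so-called \emph{permutation lemma}. The reduction is quick; the three-mode base case is where the real work lies.

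For the reduction, I would assemble the factors of mode $i$ into a matrix $V_i = [v_i^1,\ \dots,\ v_i^r]$ and merge modes $3, \dots, d$ by forming the Khatri--Rao product $V_3 \odot \cdots \odot V_d$. The auxiliary fact I need is supermultiplicativity of the Kruskal rank under Khatri--Rao products, $k(A \odot B) \geq \min(r,\ k(A) + k(B) - 1)$, which I would establish as a short lemma before the main argument. Iterating gives
\begin{align}
    k(V_3 \odot \cdots \odot V_d) \geq \min\!\left(r,\ \sum_{i = 3}^{d} k_i - (d - 3)\right),
\end{align}
and substituting into \cref{eq:kruskals_condition} yields the three-mode Kruskal condition $k_1 + k_2 + k(V_3 \odot \cdots \odot V_d) \geq 2 r + 2$ for the merged three-way tensor. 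Uniqueness of the three-way decomposition then pulls back to uniqueness of the original one, since each column of $V_3 \odot \cdots \odot V_d$ is a rank-one outer product and factors uniquely up to rescaling of its factors.

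For the base case $d = 3$, suppose $T$ admits two rank-$r$ decompositions with factor matrices $(A, B, C)$ and $(A', B', C')$, both satisfying Kruskal's condition. The three matrix unfoldings of $T$ force the column spans of $A$ and $A'$ to coincide (and likewise for the other two modes), so $A' = A M$, $B' = B N$, $C' = C P$ for invertible $r \times r$ matrices $M$, $N$, $P$ which satisfy a compatibility relation inherited from $T$. The goal is to show that each of $M$, $N$, $P$ is monomial---a permutation times a diagonal matrix---with compatible scalings.

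The main obstacle, and the combinatorial heart of the proof, is Kruskal's permutation lemma: it uses the Kruskal-rank hypothesis to deduce that any such $M$ must be monomial. I would prove it by induction on the number of nonzero entries in the columns of $M$, using a pigeonhole argument to propagate sparsity across the three modes---a column of $M$ with too many nonzero entries would force a linear dependence among columns of $A$ contradicting the Kruskal bound, while simultaneously constraining the supports of the corresponding columns of $N$ and $P$. This bookkeeping is what makes the original proof lengthy; once the lemma is in hand, the compatibility of the diagonal rescalings follows immediately by substituting the monomial forms of $M$, $N$, $P$ back into the two decompositions and matching rank-one terms.
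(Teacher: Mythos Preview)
The paper does not actually prove this proposition: it cites Landsberg~[Theorem 12.5.3.2] for the complex statement and observes that complex uniqueness implies real uniqueness. So there is no in-paper argument to compare against, and your sketch is already far more than the paper offers.

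Your strategy---reduce to $d = 3$ via Khatri--Rao products and then invoke the permutation lemma---is the standard one (Sidiropoulos--Bro for the reduction, Kruskal's original combinatorics for the base case). But the reduction step as written has a real gap: merging modes $3, \dots, d$ into a single block does not always recover the three-mode condition. Take $d = 4$, $r = 5$, and $(k_1, k_2, k_3, k_4) = (2, 2, 5, 5)$. Then $\sum_i k_i = 14 \geq 2r + d - 1 = 13$, yet the merged third factor has Kruskal rank $\min(5,\, 5 + 5 - 1) = 5$, so the three-mode sum is $2 + 2 + 5 = 9 < 2r + 2 = 12$. The repair is to choose which modes to merge: inductively combine the two modes of \emph{smallest} Kruskal rank. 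With $k_1 \leq \dots \leq k_d$: if $k_1 + k_2 \leq r + 1$, the merged Kruskal rank is at least $k_1 + k_2 - 1$ and the $(d-1)$-mode inequality drops by exactly one as required; if instead $k_1 + k_2 \geq r + 2$, then $k_i \geq k_2 \geq (r+2)/2$ for all $i \geq 2$, whence $\sum_{i \geq 3} k_i \geq (d-2)(r+2)/2 \geq r + d - 2$ for $d \geq 4$, which is again enough. Either way the induction runs down to $d = 3$.

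A smaller remark on the base case: writing $A' = A M$ with $M$ \emph{invertible} quietly assumes $A$ has full column rank $r$, but the hypothesis only forces $k_A \geq 2$. Kruskal's permutation lemma does not route through such a change-of-basis matrix; it compares the two factor sets directly using the Kruskal-rank bounds. Your sketch is in the right spirit, but that setup would need to change once you fill in the details.
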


Landsberg~\cite[Theorem 12.5.3.2]{Landsberg12} formulates Kruskal's theorem for complex vector spaces, but if a decomposition is real and complex unique then it is of course also real unique.
The condition \cref{eq:kruskals_condition} is called \emph{Kruskal's criterion}.
Whenever we use \nameref{prop:kruskal} in the paper, we are using it via the following corollary.

\begin{corollary}
	Let $T \in \reals^{n_1} \otimes \dots \otimes \reals^{n_d}$ be CP rank $r$ with CP decomposition \cref{eq:cp_decomposition}.
	Assume that, for each mode $i$, the factors $v_{i}^{1}$, \dots, $v_{i}^{r}$ are linearly independent.
	Then the decomposition is unique up to permutation of the terms and rescaling of the factors.
\end{corollary}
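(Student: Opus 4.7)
The plan is to deduce the corollary directly from Kruskal's theorem (\cref{prop:kruskal}). Linear independence of the factors $v_i^1, \ldots, v_i^r$ in each mode $i$ means that any subset of them is independent, so the Kruskal rank $k_i$ defined in \cref{prop:kruskal} equals $r$ for every $i$, giving $\sum_i k_i = dr$. Substituting this into Kruskal's criterion \cref{eq:kruskals_condition} reduces the criterion to
\begin{align*}
    2r + d - 1 \leq dr, \quad \text{equivalently} \quad (d - 2)r \geq d - 1.
\end{align*}

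Next I would verify this inequality under the paper's standing assumption $d \geq 3$. For $r \geq 2$ it is immediate: $(d - 2)r \geq 2(d - 2) = 2d - 4 \geq d - 1$ because $d \geq 3$. Thus for $r \geq 2$, the hypotheses of \cref{prop:kruskal} are satisfied, and the desired uniqueness up to permutation and rescaling follows at once.

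The case $r = 1$ is borderline in Kruskal's criterion---both sides reduce to $d$ and $d+1$, so the criterion fails---and must be handled separately. However, a CP rank one tensor $v_1 \otimes \cdots \otimes v_d$ with each $v_i \neq 0$ (the linear independence assumption in the rank one setting) is elementarily seen to be unique up to rescalings $v_i \mapsto \lambda_i v_i$ satisfying $\prod_i \lambda_i = 1$, which matches the claimed form of uniqueness. This can be argued, for instance, by noting that the image of the mode $i$ flattening of such a tensor is the one-dimensional line spanned by $v_i$, which pins $v_i$ down up to a scalar. The only potential obstacle is this edge case, but since rank one uniqueness is standard, it is a minor detail rather than a real difficulty; the bulk of the corollary is a direct bookkeeping consequence of \cref{prop:kruskal}.
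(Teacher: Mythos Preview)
Your proposal is correct and follows exactly the route the paper intends: the corollary is stated in the paper without proof, as an immediate consequence of \cref{prop:kruskal}, and your argument simply makes the verification of Kruskal's criterion explicit. Your observation that the criterion narrowly fails at $r = 1$ and requires the elementary rank-one uniqueness argument is a genuine (and correct) point of care that the paper glosses over.
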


Note that $v_{i}^{1}$, \dots, $v_{i}^{r}$ are linearly independent in $\reals^{n_i}$ implies $r \leq n_i$ for all $i$.

\subsection{Smooth manifold}

Let $\inverse{\operatorname{rank}}(r)$ denote the set of tensors with CP rank $r$.
Its closure, $\mathbar{\inverse{\operatorname{rank}}(r)}$, is an algebraic variety.

\begin{lemma}\label{prop:cp_tensor_open_orbit}
	Let $r \leq n_i$ for all $i$.
	Then
	\begin{align}
		G \actson \inverse{\operatorname{rank}}(r)
	\end{align}
	has an open dense orbit, $\Sigma_r$, consisting of elements with maximal multilinear rank.
\end{lemma}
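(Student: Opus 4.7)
The plan is to exhibit an explicit tensor $T_0$ of maximal multilinear rank in $\inverse{\operatorname{rank}}(r)$, identify its orbit with the set of all such tensors, and then argue that this set is open and dense in $\inverse{\operatorname{rank}}(r)$.

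First, I would take the model tensor $T_0 = \sum_{j = 1}^{r} e_{1}^{j} \otimes \dots \otimes e_{d}^{j}$, where $e_{i}^{j}$ denotes the $j$-th standard basis vector of $\reals^{n_i}$; this is defined because $r \leq n_i$. Its mode-$i$ factors $e_{i}^{1}, \dots, e_{i}^{r}$ are linearly independent, so the corollary to Kruskal's theorem gives that $T_0$ has CP rank exactly $r$, while each mode-$i$ unfolding visibly has rank $r$, so $T_0$ has multilinear rank $(r, \dots, r)$, the maximum possible for a CP rank $r$ tensor.

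Next, I would describe the orbit. A generic element $g \cdot T_0 = \sum_j (g_1 e_1^j) \otimes \dots \otimes (g_d e_d^j)$ has mode-$i$ factors equal to the first $r$ columns of $g_i$, which are linearly independent, so Kruskal again gives CP rank $r$ and maximal multilinear rank. Conversely, any $T \in \inverse{\operatorname{rank}}(r)$ with maximal multilinear rank admits a CP decomposition $\sum_j w_1^j \otimes \dots \otimes w_d^j$ in which each family $w_i^1, \dots, w_i^r$ is linearly independent, since these span the mode-$i$ unfolding, which must have dimension $r$. Because $r \leq n_i$, each such family extends to a basis of $\reals^{n_i}$, giving $g_i \in \mathrm{GL}(n_i)$ with $g_i e_i^j = w_i^j$, so $g \cdot T_0 = T$. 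Hence the orbit $\Sigma_r \definedas G \cdot T_0$ equals the set of all elements of $\inverse{\operatorname{rank}}(r)$ with maximal multilinear rank.

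Openness in $\inverse{\operatorname{rank}}(r)$ is then easy: maximal multilinear rank means every mode-$i$ unfolding has rank $r$, an open condition cut out by the nonvanishing of some $r \times r$ minor. For density, I would use the parameterization $\phi \from \reals^{n_1 \times r} \times \dots \times \reals^{n_d \times r} \to \mathbar{\inverse{\operatorname{rank}}(r)}$ sending $(V_1, \dots, V_d)$ to $\sum_j v_1^j \otimes \dots \otimes v_d^j$, where $v_i^j$ is the $j$-th column of $V_i$. The preimage $\inverse{\phi}(\Sigma_r)$ is the Zariski open dense subset on which every $V_i$ has full column rank, and since $\phi$ is a morphism from an irreducible variety, restricting to this subset does not change the Zariski closure of the image. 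Finally, $\Sigma_r$ is the orbit of an algebraic group action and therefore locally closed, hence open and dense in its closure $\mathbar{\inverse{\operatorname{rank}}(r)}$, and so also open and dense in $\inverse{\operatorname{rank}}(r)$.

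The main obstacle I expect is keeping the topology on $\inverse{\operatorname{rank}}(r)$ straight: it is generally not closed and not itself an algebraic variety, so the density argument has to pass through its Zariski closure, relying on the fact that $\inverse{\operatorname{rank}}(r)$ is by definition Zariski dense in $\mathbar{\inverse{\operatorname{rank}}(r)}$. Once the parameterization $\phi$ is set up and Kruskal's uniqueness is invoked to rule out rank drops on the open locus, the algebraic geometry delivers both openness and density with little additional work.
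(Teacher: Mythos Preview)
Your proposal is correct and follows essentially the same route as the paper: the same model tensor $T_0$, the identification of its orbit with the locus of maximal multilinear rank (equivalently, CP decompositions with linearly independent factors in every mode), and Zariski openness of that locus in $\overline{\operatorname{rank}^{-1}(r)}$ to obtain density. The only wrinkle is that you invoke Kruskal's theorem to certify that $T_0$ and $g\cdot T_0$ have CP rank exactly $r$, but as stated in the paper Kruskal already \emph{assumes} the rank is $r$; the paper instead uses the elementary fact that the $G$-action preserves CP rank (and you already observe that the mode-$i$ unfolding has rank $r$, which directly gives CP rank $\geq r$), so Kruskal is unnecessary for this lemma.
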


\begin{remark}
	Maximal multilinear rank in $\inverse{\operatorname{rank}}(r)$ is $(r, \dots, r)$.
\end{remark}

\begin{proof}
	CP rank is preserved by the action of $G$ since
	\begin{enumerate}
		\item if $\sum_{j = 1}^{r} v_{1}^{j} \otimes \dots \otimes v_{d}^{j}$ is a rank $r$ decomposition of $T$, then $\sum_{j = 1}^{r} (g_1 v_{1}^{j}) \otimes \dots \otimes (g_d v_{d}^{j})$ is a rank $r$ decomposition of $g \cdot T$, so $\operatorname{rank}(T) \geq \operatorname{rank}(g \cdot T)$, and
		\item if $\sum_{j = 1}^{r} v_{1}^{j} \otimes \dots \otimes v_{d}^{j}$ is a rank $r$ decomposition of $g \cdot T$, then $\sum_{j = 1}^{r} (\inverse{g_1} v_{1}^{j}) \otimes \dots \otimes (\inverse{g_d} v_{d}^{j})$ is a rank $r$ decomposition of $T$, so $\operatorname{rank}(g\cdot T) \geq \operatorname{rank}(T)$.
	\end{enumerate}
	Thus the action is well-defined.

	Let $e_{i}^{1}$, \dots, $e_{i}^{n_{i}}$ be the standard basis for $\reals^{n_i}$ and define
	\begin{align}\label{eq:cp_fixed_point}
		T = \sum_{j = 1}^{r} e_{1}^{j} \otimes \dots \otimes e_{d}^{j}.
	\end{align}
	We now want to show that $\Sigma_r = G \cdot T$ is Zariski open in $\mathbar{\inverse{\operatorname{rank}}(r)}$.
	It then follows that $\Sigma_{r}$ is open and dense in $\inverse{\operatorname{rank}}(r)$.

	Any other tensor,
	\begin{align}
		S = \sum_{j = 1}^{r} f_{1}^{j} \otimes \dots \otimes f_{d}^{j},
	\end{align}
	with linearly independent $f_{i}^{1}$, \dots, $f_{i}^{r}$ is reached by the group element $(g_1, \dots, g_d)$ satisfying $g_i e_{i}^{j} = f_{i}^{j}$.
	Moreover, if $f_{i}^{1}$, \dots, $f_{i}^{r}$ are linearly dependent for some $i$, then $S$ can't be reached by $G$.
	The complement of $\Sigma_r$ is hence described by a Zariski closed condition, so $\Sigma_r$ is Zariski open.

	To show that elements in $\Sigma_r$ have maximal multilinear rank, note that the matrix
	\begin{align}
		T_{(i)} = \sum_{j = 1}^{r} (e_{1}^{j} \otimes \cdots \mathhat{e_{i}^{j}} \dots \otimes e_{d}^{j}) \otimes e_{i}^{j}
	\end{align}
	has rank $r$ since $e_{1}^{j} \otimes \cdots \mathhat{e_{i}^{j}} \cdots \otimes e_{d}^{j}$ are $r$ linearly independent vectors.
	Moreover, the action of $G$ preserves multilinear rank.
	
	To show that elements outside of $\Sigma_r$ do not have maximal multilinear rank, note that if $f_{i}^{1}$, \dots, $f_{i}^{r}$ are linearly dependent for some $i$, and
	\begin{align}
		R = \sum_{j = 1}^{r} f_{1}^{j} \otimes \dots \otimes f_{d}^{j},
	\end{align}
	then the column space of $R_{(i)}$ is spanned by $r - 1$ vectors.
\end{proof}

We now consider the subgroup of $G$ that fixes the $T$ defined in \cref{eq:cp_fixed_point}.
Applying \nameref{prop:kruskal}, we have the following result.

\begin{lemma}\label{prop:cp_tensor_stabilizer}
	The stabilizer $H$ of $G \actson \Sigma_r$ consists of elements of the form
	\begin{align}\label{eq:cp_tensor_stabilizer}
		h =
		\begin{bmatrix}
			 D_{1} Q & M_{1}\\
			& A_{1}
		\end{bmatrix}
		\times \dots \times
		\begin{bmatrix}
			 D_{d} Q & M_{d}\\
			& A_{d}
		\end{bmatrix},
	\end{align}
	where the $D_{i}$ are diagonal invertible $r \times r$ matrices such that $D_{1} \cdots D_{d} = 1$, $Q$ is a permutation matrix, the $A_{i}$ are invertible $(n_{i} - r) \times (n_{i} - r)$ matrices, and the $M_{i}$ are arbitrary $r \times (n_{i} - r)$ matrices.
\end{lemma}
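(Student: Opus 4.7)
The plan is to apply the corollary of Kruskal's theorem to the defining equation of the stabilizer, and then repackage the resulting data into the block form claimed. Fix $h = (g_1, \dots, g_d) \in H$, so that
\begin{align}
\sum_{j=1}^r (g_1 e_1^j) \otimes \dots \otimes (g_d e_d^j) = \sum_{j=1}^r e_1^j \otimes \dots \otimes e_d^j.
\end{align}
By \cref{prop:cp_tensor_open_orbit}, every element of $\Sigma_r$ has multilinear rank $(r, \dots, r)$, and the $G$-action preserves multilinear rank, so both sides have $r$-dimensional column space in every mode. Hence $g_i e_i^1, \dots, g_i e_i^r$ are linearly independent for each $i$, and the corollary of Kruskal's theorem yields a single permutation $\sigma$ of $\set{1, \dots, r}$ and scalars $\mu_{i,j}$ with $\prod_{i=1}^d \mu_{i,j} = 1$ for each $j$ such that $g_i e_i^j = \mu_{i,j} e_i^{\sigma(j)}$ for all $i$ and all $j \leq r$.

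The second step is to read off the matrix form. The identities $g_i e_i^j = \mu_{i,j} e_i^{\sigma(j)}$ say that the first $r$ columns of $g_i$ lie in the span of $e_i^1, \dots, e_i^r$ and form a monomial matrix there. Hence the lower-left $(n_i - r) \times r$ block of $g_i$ vanishes, the upper-left $r \times r$ block factors as $D_i Q$ with $Q$ the permutation matrix of $\sigma$ (the same $Q$ for every $i$, since Kruskal gives a single permutation of the summands), and $D_i$ diagonal. The normalization $\prod_{i=1}^d \mu_{i,j} = 1$ translates, after reindexing by $k = \sigma(j)$, to $D_1 \cdots D_d = I$. Since $g_i$ is invertible and block upper-triangular, both diagonal blocks must be invertible---equivalently, $D_i$ has nonzero diagonal and $A_i \in \mathrm{GL}(n_i - r)$---while $M_i$ is unconstrained.

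For the converse, a direct computation suffices. Any $h$ of the stated form sends $e_i^j$ to $(D_i)_{\sigma(j), \sigma(j)} \, e_i^{\sigma(j)}$ for $j \leq r$, hence
\begin{align}
h \cdot T = \sum_{j=1}^r \Bigl(\prod_{i=1}^d (D_i)_{\sigma(j), \sigma(j)}\Bigr) \, e_1^{\sigma(j)} \otimes \dots \otimes e_d^{\sigma(j)},
\end{align}
which reduces to $T$ after the substitution $k = \sigma(j)$ and using $D_1 \cdots D_d = I$.

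The only genuinely delicate step is the first one: one must verify that Kruskal's uniqueness applies to the transformed decomposition, which rests on the observation that $G$ preserves $\Sigma_r$ and hence the linear independence of factors in every mode. Everything afterwards is bookkeeping to repackage the scalars $\mu_{i,j}$ and the common permutation $\sigma$ into the diagonal-times-permutation block of $g_i$.
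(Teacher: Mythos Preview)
Your proof is correct and follows exactly the approach the paper indicates: the paper simply says ``Applying \nameref{prop:kruskal}, we have the following result'' and states the lemma without further argument, so you have filled in the details the paper leaves implicit. One minor simplification: you do not need the detour through multilinear rank to establish that $g_i e_i^1, \dots, g_i e_i^r$ are linearly independent---this follows immediately from the invertibility of $g_i$ and the linear independence of $e_i^1, \dots, e_i^r$.
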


Combining \cref{prop:cp_tensor_open_orbit,prop:cp_tensor_stabilizer}, we have the main result of this subsection.

\begin{theorem}\label{thm:cp_quotient}
	The set of tensors with CP rank $r$ and multilinear rank $(r, \dots, r)$ is a smooth homogeneous manifold,
	\begin{align}
		\Sigma_{r} = G / H.
	\end{align}
\end{theorem}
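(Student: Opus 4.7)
The plan is to combine the two preceding lemmas with the standard quotient manifold theorem for Lie groups. By \cref{prop:cp_tensor_open_orbit}, $\Sigma_r$ equals the single orbit $G \cdot T$, where $T = \sum_{j=1}^{r} e_{1}^{j} \otimes \dots \otimes e_{d}^{j}$ is the reference tensor from \cref{eq:cp_fixed_point}, and by \cref{prop:cp_tensor_stabilizer} the stabilizer of $T$ is the explicit subgroup $H$ given in \cref{eq:cp_tensor_stabilizer}. The orbit--stabilizer correspondence then yields a set-theoretic bijection $G/H \to \Sigma_r$, $gH \mapsto g \cdot T$, so the remaining task is to put a compatible smooth structure on both sides.

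To equip $G/H$ with a smooth structure, I would invoke the quotient manifold theorem (Lee~\cite[Theorem 21.17]{Lee13}): if $H$ is a closed Lie subgroup of $G$, then $G/H$ carries a unique smooth structure for which the projection $\pi \from G \to G/H$ is a smooth submersion. The hypothesis needs verification: inside each $\mathrm{GL}(n_i)$ the block-upper-triangular condition together with the vanishing of the lower-left block is Zariski closed, the scalar constraint $D_{1}\cdots D_{d} = I$ is polynomial, and the discrete permutation factor $Q$ only enlarges $H$ by finitely many connected components. Therefore $H$ is a closed, hence embedded, Lie subgroup of $G$ by Cartan's closed subgroup theorem.

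To finally identify the abstract homogeneous space $G/H$ with the tensor set $\Sigma_r$, I would use that the orbit map $\varphi \from G \to \Sigma_r$, $g \mapsto g \cdot T$, is $G$-equivariant with respect to left translation on $G$, and so has constant rank everywhere. Hence $\varphi$ descends to a smooth bijection $G/H \to \Sigma_r$ whose differential is everywhere an isomorphism, i.e., a diffeomorphism. The only remaining subtlety is that the orbit $\Sigma_r$ sits inside the ambient tensor space as the Zariski-open subset of the variety $\closure{\inverse{\operatorname{rank}}(r)}$ established in \cref{prop:cp_tensor_open_orbit}, which is locally closed; this is precisely the condition needed to promote the equivariant injective immersion $G/H \hookrightarrow \reals^{n_1} \otimes \dots \otimes \reals^{n_d}$ to an embedding of the homogeneous manifold onto $\Sigma_r$.

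The main obstacle here is essentially bookkeeping, since the two lemmas have already absorbed the substantive algebraic-geometric content (the Zariski openness of the orbit and Kruskal-based identification of the stabilizer). The remaining work is a routine application of the standard homogeneous-space machinery, and I expect no further difficulty beyond naming the correct theorems.
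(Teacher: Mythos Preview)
Your proposal is correct and takes essentially the same approach as the paper: the paper simply states that the theorem follows by combining \cref{prop:cp_tensor_open_orbit,prop:cp_tensor_stabilizer}, relying on the general orbit--stabilizer machinery already recorded in \cref{sub:Homogeneous spaces}. You have unpacked that machinery explicitly (closedness of $H$, the quotient manifold theorem, constant rank of the orbit map), which is more detail than the paper gives but not a different argument.
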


\subsection{Representatives}%
\label{sub:cp_representatives}

A point $p = g H \in \Sigma_{r}$ can be defined by specifying $g = (g_1, \dots, g_d)$.
However, this requires specifying $n_1^{2} + \dots + n_d^{2}$ numbers, while the dimension of $\Sigma_{r}$ is only $(n_1 + \dots + n_d) r - (d - 1) r$.
To address this, we observe that a generic $g_i \in \mathrm{GL}(n_i)$ can be reduced to block lower triangular form by the action of $H$.
Define $h_{i}$ via
\begin{align}
	g_{i} =
	\begin{bmatrix}
		g_{11} & g_{12}\\
		g_{21} & g_{22}
	\end{bmatrix}
	=
	\begin{bmatrix}
		g_{11} & \\
		g_{21} & 1
	\end{bmatrix}
	\underbrace{
	\begin{bmatrix}
		1 & \inverse{g_{11}} g_{12}\\
		 & g_{22} - g_{21} \inverse{g_{11}} g_{12}
	\end{bmatrix}
	}_{\inverse{h_i}},
\end{align}
and note that $h = (h_1, \dots, h_d) \in H$.
Here, we see that $g_{11}$ needs to be invertible.
If $g_{11}$ is not invertible, there is a permutation matrix $P$ such that $g_i' = P g_i$ has an invertible block $g'_{11}$.
In this way we only need to specify $g_{11}$, $g_{21}$, and $P$ for each $i$.

In practice, we want to choose $P$ so that the determinant of $g_{11}$ is maximized.
This is known as the \emph{submatrix selection} problem.
It has been studied in depth because of its application to CUR-type matrix decompositions.
See for example the review paper by Halko, Martinsson, and Tropp~\cite{Halko11}.
We also mention the recent paper by Osinsky~\cite{Osinsky25} showing that a quasioptimal choice can be made using $\order{n r^{2}}$ basic operations, and the randomized version proposed by Cortinovis and Kressner~\cite{Cortinovis25}.
Hence there are efficient algorithms to choose $g_{11}$.

\subsection{Riemannian manifold}%
\label{sub:cp_riemannian_manifold}

$G$'s algebra, $\mathfrak{g}$, consists of elements $Z = (Z_{1}, \dots, Z_{d})$ where the $Z_i$ are arbitrary $n_i \times n_i$ matrices.
We consider the Euclidean inner product on $\mathfrak{g}$, defined as
\begin{align}
	\innerproduct{Z}{Z'} ={}& \operatorname{tr}\left( Z_{1} \transpose{{Z'_{1}}} \right) + \dots + \operatorname{tr}\left( Z_{d} \transpose{{Z'_{d}}} \right).
\end{align}

$H$'s Lie algebra, $\mathfrak{h}$, is a Lie subalgebra of $\mathfrak{g}$ and by taking the derivative of the expression \cref{eq:cp_tensor_stabilizer} in \cref{prop:cp_tensor_stabilizer} we see that $\mathfrak{h}$ consists of elements $Y = (Y_{1}, \dots, Y_{d})$ where the $Y_{i}$ are on block form $\begin{bmatrix} Y^{i}_{11} & Y^{i}_{12}\\ & Y^{i}_{22} \end{bmatrix}$ with $Y^{i}_{11}$ diagonal such that $Y^{1}_{11} + \dots + Y^{d}_{11} = 0$, and $Y^{i}_{12}$ and $Y^{i}_{22}$ arbitrary.
The orthogonal complement of $\mathfrak{h}$, which we denote $\mathfrak{m}$, thus consists of elements $X = (X_{1}, \dots, X_{d})$ where the $X_{i}$ are on block form $\begin{bmatrix} X^{i}_{11} & \\ X^{i}_{21} & 0 \end{bmatrix}$ such that $X^{i}_{11}$ has the same diagonal for every $i$ but is otherwise arbitrary, and $X^{i}_{21}$ is arbitrary.

For any $g \in G$, the coset $g H$ is a submanifold of $G$.
Its tangent space at $g$ is called the \emph{vertical space}, and is denoted $\mathcal{V}_{g}$.
The orthogonal complement to the vertical space is called \emph{horizontal space}, and is denoted $\mathcal{H}_{g}$.
Thus $\mathfrak{h}$ and $\mathfrak{m}$ are the vertical and horizontal spaces at $1$.

Now, consider the right-invariant metric\footnote{We need the right-invariant metric rather than the left-invariant because we are dividing $G$ by $H$ on the right, so the metric on $G$ needs to be at least right-$H$-invariant.} on $G$ induced by the inner product on $\mathfrak{g}$.
The \emph{canonical metric} on $\Sigma_r$ is then defined by demanding that the quotient map $\pi \from G \to \Sigma_r$ is a Riemannian submersion.
By construction, $\dd\pi_{g}\eval_{\mathcal{H}_{g}} \from \mathcal{H}_{g} \to T_{g H} \Sigma_r$ is a linear isomorphism.
So in other words, we are defining our metric by demanding that $\dd\pi_{g}\eval_{\mathcal{H}_{g}}$ is also an isometry.

Note also that the right-invariant metric on $\mathrm{GL}(n)$ is left-$\mathrm{O}(n)$-invariant.
In light of our discussion in \cref{sub:cp_representatives}, this is important because permutation matrices are orthogonal and so we can work with any representative.

\bigskip

We now want to derive a more explicit description of the horizontal space.
First, note that $\mathcal{V}_{g} = g \mathfrak{h}$, so that the vertical space consists of elements $Y = (Y_{1}, \dots, Y_{d})$ where the $Y_{i} = g_i \begin{bmatrix} Y_{11} & Y_{12} \\ & Y_{22} \end{bmatrix}$.
From now on, we suppress the $i$ superscript, but note that $Y_{11} = Y^{i}_{11}$ and the other blocks still depends on $i$.
Second, if we choose a block lower triangular representative as in \cref{sub:cp_representatives}, we have
{\hfuzz=8pt
\begin{align}
	&\innerproduct{X_{i}}{Y_{i}}_{g_{i}} =
		\operatorname{tr} \left(
		\begin{bmatrix}
			X_{11} & X_{12}\\
			X_{21} & X_{22}
		\end{bmatrix}
		\inverse{\begin{bmatrix}
			g_{11} & \\
			g_{21} & 1
		\end{bmatrix}}
		\transpose{\left( 
		\begin{bmatrix}
			g_{11} & \\
			g_{21} & 1
		\end{bmatrix}
		\begin{bmatrix}
			Y_{11} & Y_{12}\\
			& Y_{22}
		\end{bmatrix}
		\inverse{\begin{bmatrix}
			g_{11} & \\
			g_{21} & 1
		\end{bmatrix}}
		\right)}
	\right)\\
	={}&
		\operatorname{tr} \left(
		\begin{bmatrix}
			X_{11} & X_{12}\\
			X_{21} & X_{22}
		\end{bmatrix}
		\begin{bmatrix}
			\inverse{g_{11}} g_{11}^{-\mathsf{T}} & -\inverse{g_{11}} g_{11}^{-\mathsf{T}} \transpose{g_{21}}\\
			-g_{21} \inverse{g_{11}} g_{11}^{-\mathsf{T}} & 1 + g_{21} \inverse{g_{11}} g_{11}^{-\mathsf{T}} \transpose{g_{21}}
		\end{bmatrix}
		\transpose{\begin{bmatrix}
			g_{11} Y_{11} & g_{11} Y_{12}\\
			g_{21} Y_{11} & g_{21} Y_{12} + Y_{22}
		\end{bmatrix}}
	\right)
\end{align}
}
Since the $Y_{12}$ is arbitrary, $Y'_{12} = g_{11} Y_{12}$ is also arbitrary.
Similarly, $Y'_{22} = g_{21} Y_{12} + Y_{22}$ is arbitrary.
Collecting the $Y'_{12}$ coefficients yields the condition
\begin{align}
	-X_{11} \inverse{g_{11}} g_{11}^{-\mathsf{T}} \transpose{g_{21}} + X_{12} (1 + g_{21} \inverse{g_{11}} g_{11}^{-\mathsf{T}} \transpose{g_{21}}) = 0.
\end{align}
Note that $(1 + g_{21} \inverse{g_{11}} g_{11}^{-\mathsf{T}} \transpose{g_{21}})$ is invertible since it is the sum of a positive definite and positive semidefinite matrix, and so it is positive definite.
We can hence solve for $X_{12}$:
\begin{align}
	X_{12} = X_{11} \Gamma_{12},
\end{align}
where $\Gamma_{12} = \inverse{g_{11}} g_{11}^{-\mathsf{T}} \transpose{g_{21}} \inverse{(1 + g_{21} \inverse{g_{11}} g_{11}^{-\mathsf{T}} \transpose{g_{21}})}$.
Similarly, collecting the $Y'_{22}$ coefficients and solving for $X_{22}$ yields
\begin{align}
	X_{22} = X_{21} \Gamma_{12}.
\end{align}
We also note that this implies that, for each $i$,
\begin{align}\label{eq:horizontal_rank_decomposition}
	X_{i} ={} \begin{bmatrix}
		X_{11} & X_{11} \Gamma_{12}\\
		X_{21} & X_{21} \Gamma_{12}
	\end{bmatrix}
	={} \begin{bmatrix}
		X_{11}\\
		X_{21}
	\end{bmatrix}
	\begin{bmatrix}
		1 & \Gamma_{12}
	\end{bmatrix}
\end{align}
is at most rank $r$.

\paragraph{The power series trick}
Before collecting the $Y_{11}$ coefficients, we discuss an alternative expression for $\Gamma_{12}$ that will allow us to compute it more efficiently.
As it is currently written, building $\Gamma_{12}$ requires inverting the $n_{i} \times n_{i}$ matrix $1 + g_{21} \inverse{g_{11}} g_{11}^{-\mathsf{T}} \transpose{g_{21}}$.
Note however that this inverse is an analytic function of the rank $r$ matrix $g_{21} \inverse{g_{11}} g_{11}^{-\mathsf{T}} \transpose{g_{21}}$.
Its power series is
\begin{align}
	\frac{1}{1 + x} = 1 - x + x^{2} - x^{3} + \cdots.
\end{align}
If $M = A B$ is a rank $r$ decomposition of an $n \times n$ matrix $M$, then
\begin{align}
	\frac{1}{1 + M} ={}& 1 - A B + (A B)^{2} - (A B)^{3} + \cdots\\
	={}& 1 + A (-1 + B A - (B A)^{2} + \cdots) B\\
	={}& 1 - A \frac{1}{1 + B A} B.
\end{align}
But $B A$ is an $r \times r$ matrix, allowing $-1 / (1 + B A)$ to be evaluated efficiently.
This trick is also useful later for evaluating the matrix exponential.
For $\Gamma_{12}$, we have the following expression,
\begin{align}\label{eq:Gamma12}
	\Gamma_{12} ={}& \inverse{g_{11}} \left( 1 - \frac{\left[ g_{11}^{-\mathsf{T}} \transpose{g_{21}} g_{21} \inverse{g_{11}} \right]}{1 + \left[ g_{11}^{-\mathsf{T}} \transpose{g_{21}} g_{21} \inverse{g_{11}} \right]} \right) g_{11}^{-\mathsf{T}} \transpose{g_{21}}.
\end{align}
In particular, the expression in brackets is an $r \times r$ matrix.

We are now ready to collect the $Y_{11}$ coefficients.
We get the condition that the $k$th column of
\begin{align}
	&\begin{bmatrix}
		X_{11}\\
		X_{21}
	\end{bmatrix}
	\inverse{g_{11}} g_{11}^{-\mathsf{T}}
	- \begin{bmatrix}
		X_{12}\\
		X_{22}
	\end{bmatrix}
	g_{21} \inverse{g_{11}} g_{11}^{-\mathsf{T}}
	= \begin{bmatrix}
		X_{11}\\
		X_{21}
	\end{bmatrix}
	\left(
	\inverse{g_{11}} g_{11}^{-\mathsf{T}}
	-
	\Gamma_{12}
	g_{21} \inverse{g_{11}} g_{11}^{-\mathsf{T}}
	\right)
\end{align}
has the same dot product with the $k$th column of $\begin{bmatrix} g_{11}\\ g_{12} \end{bmatrix}$ for every $i$.

This completes the description of the horizontal space $\mathcal{H}_{g}$.
The most important takeaway is that horizontal vectors have a decomposition \cref{eq:horizontal_rank_decomposition}.

\subsection{Riemannian homogeneous manifold}%
\label{sub:cp_riemannian_homogeneous_manifold}

The construction described in \cref{sub:cp_riemannian_manifold} uses a right-invariant metric on $G$, but the resulting metric on $\Sigma_r$ is not necessarily right-invariant.
In fact, not all smooth homogeneous manifolds allow for an invariant metric.
The underlying issue is that since $h \cdot p = p$ for all $h \in H$, the inner product on $T_p \Sigma_r$ has to be invariant under $H \actson \Sigma_r$, but such an inner product might not exist.

The technical condition that we want to satisfy is that there exists a subspace $\mathfrak{p} \subset \mathfrak{g}$, with $\mathfrak{p} \oplus \mathfrak{h} = \mathfrak{g}$, such that $h \mathfrak{p} \inverse{h} \subset \mathfrak{p}$ for all $h \in H$.
$G / H$ is then called \emph{reductive} and $\mathfrak{p}$ is called an \emph{invariant subspace}.
See O'Neill~\cite[Chapter 11]{Oneill83} for more details.

\begin{proposition}\label{prop:cp_reductive}
	$\Sigma_r$ is reductive if and only if $r = n_1 = \dots = n_d$.
	Moreover, when $\Sigma_r$ is reductive, the $\mathfrak{m}$ that we defined in \cref{sub:cp_riemannian_manifold} is an invariant subspace.
\end{proposition}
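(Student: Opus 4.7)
My plan is to take $\mathfrak{m}$ itself as the candidate invariant complement. Every complement to $\mathfrak{h}$ in $\mathfrak{g}$ can be written as a graph $\mathfrak{p} = \{X + \phi(X) : X \in \mathfrak{m}\}$ for a unique linear $\phi \from \mathfrak{m} \to \mathfrak{h}$, and via the identity component of $H$, $\mathrm{Ad}(H)$-invariance of $\mathfrak{p}$ is equivalent to the cocycle condition $\pi_\mathfrak{h}[Y, X + \phi(X)] = \phi\bigl(\pi_\mathfrak{m}[Y, X + \phi(X)]\bigr)$ for all $Y \in \mathfrak{h}$ and $X \in \mathfrak{m}$, with the discrete permutation part of $H$ handled separately.

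For the easy direction, assume $r = n_1 = \dots = n_d$. The second blocks in \cref{eq:cp_tensor_stabilizer} then collapse, so $H$ consists of tuples $(D_1 Q, \dots, D_d Q)$ with $D_i$ invertible diagonal and $\prod_i D_i = I$, while $\mathfrak{m}$ consists of tuples $(X_1, \dots, X_d)$ sharing a common diagonal. Conjugation by the uniform permutation $Q$ permutes this diagonal the same way in every mode, and conjugation by a diagonal $D_i$ leaves diagonals unchanged; hence each $(\mathrm{Ad}(h) X)_i = D_i Q X_i Q^{-1} D_i^{-1}$ retains the common diagonal $Q \operatorname{diag}(X_1)$, placing $\mathrm{Ad}(h) X$ back in $\mathfrak{m}$.

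For the hard direction, assume $r < n_k$ for some $k$ and aim for a contradiction. The key input is the nilpotent element $N \in \mathfrak{h}$ with $N_k = \begin{bmatrix} 0 & M \\ & 0 \end{bmatrix}$ and $N_i = 0$ for $i \neq k$, where $M \in \reals^{r \times (n_k - r)}$ is arbitrary. When $r \geq 2$, I take $X \in \mathfrak{m}$ supported only on the off-diagonal part $X_{11}^{k, \text{off}}$ of $X_{11}^k$; direct expansion shows $[N, X + \phi(X)]$ is a pure $\mathfrak{h}$ element (only the mode-$k$ $(1,2)$-block survives), so the cocycle reduces to $M r_k(X) = (X_{11}^{k, \text{off}} + p_k(X)) M$ for every $M$. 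A Sylvester-type argument then forces $X_{11}^{k, \text{off}} + p_k(X) = \mu(X) I_r$, which is incompatible with arbitrary off-diagonal $X_{11}^{k, \text{off}}$ since $p_k(X)$ is diagonal. When $r = 1$, no such off-diagonal is available, so I apply the cocycle twice: once to $X = (v, 0) \in \mathfrak{m}$, producing the Sylvester identity $r_k(v, 0) = (v + p_k(v, 0)) I_{n_k - 1}$; and once to $X = (0, b_k)$ with $b_k = X_{21}^k$, producing the direct formula $r_k(Z) = -b_k M$ together with $v_Z = \tfrac{1}{d} M b_k$ and $p_k(Z) = (d-1) v_Z$ (the latter from the sum constraint $\sum_i p_i(Z) = 0$ required for $\phi(Z) \in \mathfrak{h}$). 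Substituting the Sylvester identity into the direct formula and taking traces yields $(n_k - 1) d\, v_Z = -M b_k = -d\, v_Z$, hence $n_k d\, v_Z = 0$, contradicting $v_Z \neq 0$ which is achievable by taking $M = \transpose{b_k}$ with $b_k \neq 0$.

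The main obstacle I anticipate is bookkeeping across the $d$ modes and cleanly separating the $r \geq 2$ and $r = 1$ subcases. Once the Sylvester identity and the sum constraint on $\phi(Z)$ are properly set up in each, the final contradiction is immediate.
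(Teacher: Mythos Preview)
Your argument is correct and rests on the same two ingredients as the paper's proof: the nilpotent block $\begin{bmatrix}0 & M\\ & 0\end{bmatrix}$ in a single mode, and a Sylvester-type identity $AM = MB$ valid for all $M$. The packaging differs: you parametrize an arbitrary complement as the graph of a linear $\phi\colon\mathfrak m\to\mathfrak h$ and work with the infinitesimal cocycle $\pi_{\mathfrak h}[N,X+\phi(X)]=\phi(\pi_{\mathfrak m}[N,X+\phi(X)])$, whereas the paper argues directly with the group conjugations $hXh^{-1}$ and $h^{-1}Xh$ and compares their first $r$ columns. Both routes reduce immediately to the same Sylvester conclusion for $r\ge 2$. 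For $r=1$ the finishes diverge slightly: the paper subtracts a scalar multiple of the identity (using the freedom just established) from $hX_ih^{-1}$ to force $X_{21}=0$, while you combine two cocycle evaluations and take a trace to obtain $n_k\,d\,v_Z=0$. Your framing is a bit more systematic and makes the role of linearity of $\phi$ explicit; the paper's version is shorter because it never names $\phi$ and reuses the same conjugation computation in both subcases. One wording nit: ``$\mathrm{Ad}(H)$-invariance is equivalent to the cocycle condition'' should read $\mathrm{Ad}(H_0)$-invariance; this is harmless here since for the non-reductive direction ruling out $H_0$-invariant complements is already sufficient.
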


\Cref{prop:cp_reductive} is a higher-order version of \cite[Proposition 3.4]{Vandereycken12} and \cite[Proposition 5.7]{MuntheKaas15}, which say that fixed-rank matrices are only reductive when they are square and full rank.

\begin{proof}
	Assume $r = n_1 = \dots = n_d$ and let $h = D_1 Q \times \dots \times D_d Q \in H$.
	Let $X \in \mathfrak{m}$ and consider the expression
	\begin{align}
		h X \inverse{h} = (D_1 Q X_1 \inverse{Q} \inverse{D_1}, \dots, D_d Q X_d \inverse{Q} \inverse{D_d}).
	\end{align}
	If $\sigma$ denotes the permutation that corresponds to $Q$, then on the diagonals we have that
	\begin{align}
		(D_i Q X_i \inverse{Q} \inverse{D_i})_{kk} ={}& (D_i)_{kk} (Q X_i \inverse{Q})_{kk} (\inverse{D_i})_{kk}\nonumber\\
		={}& (D_i)_{kk} (X_i)_{\sigma(k) \sigma(k)} (\inverse{D_i})_{kk}\nonumber\\
		={}& (X_i)_{\sigma(k) \sigma(k)}.
	\end{align}
	Hence $h X \inverse{h}$ is contained in $\mathfrak{m}$, which is what we wanted to show.

	Conversely, assume there exists an invariant subspace $\mathfrak{p}$ when $n_i > r$ for some $i$.
	Note that elements in $\mathfrak{p}$ are determined by their projection to $\mathfrak{m}$.
	Concretely, elements in $\mathfrak{p}$ are determined by their $r$ first columns.
	So consider an element $X = (X_1, \dots, X_d) \in \mathfrak{p}$ with
	\begin{align}
		X_i =
		\begin{bmatrix}
			X_{11} & X_{12}\\
			X_{21} & X_{22}
		\end{bmatrix}.
	\end{align}
	$X_{12}$ and $X_{22}$ are functions of $X_{11}$ and $X_{21}$.
	For our contradiction, we will show that $X_{11}$ is a multiple of the identity.
	Then the dimension of $\mathfrak{p}$ is too low to be complementary to $\mathfrak{h}$.

	First, assume $X_{21} = 0$ and let $h = \begin{bmatrix} 1 & M\\ & 1 \end{bmatrix}$ with $M$ arbitrary.
	Then $\inverse{h} = \begin{bmatrix} 1 & -M\\ & 1 \end{bmatrix}$ and
	\begin{align}
		h X_i \inverse{h}
		={}& 
		\begin{bmatrix}
			X_{11} & -X_{11} M + X_{12} + M X_{22}\\
			& X_{22}
		\end{bmatrix}\\
		\inverse{h} X_i h
		={}& 
		\begin{bmatrix}
			X_{11} & X_{11} M + X_{12} - M X_{22}\\
			& X_{22}
		\end{bmatrix}.
	\end{align}
	The first $r$ columns of $h X_i \inverse{h}$ and $\inverse{h} X_i h$ are the same, so by our previous comment they must be equal.
	Thus
	\begin{align}
		X_{11} M - M X_{22} = 0
	\end{align}
	for all $M$.
	Writing this as
	\begin{align}
		(X_{11} \otimes 1 - 1 \otimes X_{22}) \operatorname{vec} M = 0,
	\end{align}
	it is clear that the only solutions are when $X_{11}$ and $X_{22}$ are multiples of identity matrices and have the same norm.

	Second, if $X_{21} \neq 0$, we can use the same argument on
	\begin{align}
		\begin{bmatrix}
			X_{11} & X_{12}\\
			X_{21} & X_{22}
		\end{bmatrix}
		-
		\begin{bmatrix}
			& X'_{12}\\
			X_{21} & X'_{22}
		\end{bmatrix}
		\in \mathfrak{p}
	\end{align}
	to see that $X_{11}$ again is a multiple of the identity.

	Whenever $r \geq 2$, the above is enough for a contradiction.
	However, when $r = 1$, $X_{11}$ is just a $1 \times 1$ matrix, and so showing that it is a multiple of the identity yields no contradiction.
	We need to change the argument slightly.
	Consider now instead the case $X_{11} = 0$.
	\begin{align}
		h
		X_i
		\inverse{h} ={}&
		\begin{bmatrix}
			M X_{21} & *\\
			X_{21} & X_{22} - X_{21} M
		\end{bmatrix}.
	\end{align}
	By the previous paragraph, we are free to add and subtract any multiple of the identity matrix and still stay in $\mathfrak{p}$.
	If we subtract the real number $M X_{21}$ from the diagonal, we are left with
	\begin{align}
		\begin{bmatrix}
			& *\\
			X_{21} & X_{22} - X_{21} M - M X_{21} \cdot 1
		\end{bmatrix}.
	\end{align}
	Similarly to before, since the first column is the same as $X_i$, it must be equal to $X_i$, but for this to be true for all $M$ implies $X_{21} = 0$.
	$X_{21}$ is ours to choose, so any restriction on it is a contradiction.
\end{proof}

\subsection{Geodesics}
\label{sub:cp_geodesics}

Since the subgroup associated with $Q$ is discrete, we may for the purposes of this subsection ignore it and set $Q = 1$.

Let $g_i \in \mathrm{GL}(n_i)$ and $X_i \in T_{g_i} \mathrm{GL}(n_i)$.
Andruchow, Larotonda, Recht, and Varela \cite{Andruchow14} show that the geodesics on the general linear group are\footnote{Note that they use a left-invariant metric while we use a right-invariant.}
\begin{align}\label{eq:geodesics_on_GL(n)}
	\operatorname{exp}_{g_i}(X_i) = \operatorname{mexp}(X_i \inverse{g_i} - \transpose{(X_i \inverse{g_i})}) \operatorname{mexp}(\transpose{(X_i \inverse{g_i})}) g_{i}.
\end{align}

Geodesics on $\Sigma_r = G / H$ are images of horizontal geodesics in $G$ under the quotient map $\pi \from G \to G / H$.
In this subsection, assuming $r \ll n_{i}$, our aim is to efficiently compute (an element in the same equivalence class as) \cref{eq:geodesics_on_GL(n)} when $X_i$ is part of a horizontal vector.
We are going to show that this can be done in $\order{n_{i} r^{2}}$ basic operations.
This is a considerable improvement over computing the matrix exponentials naively, which is $\order{n_{i}^{3}}$ basic operations.

First, recall that horizontal vectors are on the form \cref{eq:horizontal_rank_decomposition}.
We thus have a rank $r$ decomposition
\begin{align}\label{eq:horizontal_rank_decomposition2}
	X_{i} \inverse{g_{i}} = \begin{bmatrix}
		X_{11}\\
		X_{21}
	\end{bmatrix}
	\begin{bmatrix}
		\inverse{g_{11}} - \Gamma_{12} g_{21} \inverse{g_{11}} & \Gamma_{12}
	\end{bmatrix}.
\end{align}
To compute the matrix exponential of such a vector, we can use the same power series trick as before.
If we name the factors in \cref{eq:horizontal_rank_decomposition2} $A \in \reals^{n_{i} \times r}$ and $B \in \reals^{r \times n_{i}}$ respectively, then
\begin{align}
	\operatorname{mexp}(X_{i} \inverse{g_{i}}) ={}& 1 + A B + \frac{1}{2} (A B)^{2} + \dots\nonumber\\
	={}& 1 + A \psi_{1}\left( B A \right) B,
\end{align}
where $\psi_{1}(x) = 1 + \frac{1}{2} x + \frac{1}{3!} x^{2} + \cdots$ is the Taylor series for $(\exp{x} - 1) / x$.
The notation $\psi_{1}$ comes from the theory of exponential integrators, where the functions
\begin{align}
	\psi_{k}(x) = \sum_{j = 0}^{\infty} \frac{1}{(j + k)!} x^{j}
\end{align}
are known as \emph{the $\psi$ functions}.
Importantly, the argument to $\psi_{1}$ is an $r \times r$ matrix, so it can be evaluated cheaply.
We will discuss exactly how shortly.

Second, we have a rank $2 r$ decomposition
\begin{align}
	X_i \inverse{g_{i}} - \transpose{(X_i \inverse{g_{i}})}
	= \begin{bmatrix}
		X_{11} & -\transpose{(\inverse{g_{11}} - \Gamma_{12} g_{21} \inverse{g_{11}})}\\
		X_{21} & -\transpose{\Gamma_{12}}
	\end{bmatrix}
	\begin{bmatrix}
		\inverse{g_{11}} - \Gamma_{12} g_{21} \inverse{g_{11}} & \Gamma_{12}\\
		\transpose{X_{11}} & \transpose{X_{21}}
	\end{bmatrix}.
\end{align}
So, similarly to before, denoting the factors $A' \in \reals^{n_{i} \times 2 r}$ and $B' \in \reals^{2 r \times n_{i}}$ respectively,
\begin{align}
	&\operatorname{mexp}(X_i \inverse{g_i} - \transpose{(X_i g_i)}) = 1 + A' \psi_{1}(B' A') B'.
\end{align}
Here, the argument to $\psi_{1}$ is an $2 r \times 2 r$ matrix, so it can be evaluated cheaply.

Putting all this into \cref{eq:geodesics_on_GL(n)} and doing the multiplications, we find an expression for the first $r$ columns,
\begin{align}
	&\operatorname{exp}_{g_{i}}(X_i) \begin{bmatrix} 1 \\ 0 \end{bmatrix}\nonumber\\
	&= \begin{bmatrix} g_{11}\\ g_{21} \end{bmatrix}
	+ \transpose{B} \transpose{\psi_{1}(B A)} \transpose{A} \begin{bmatrix} g_{11}\\ g_{21} \end{bmatrix}
	+ A' \psi_{1}(B' A') B' \begin{bmatrix} g_{11}\\ g_{21} \end{bmatrix}
	+ \transpose{B} \transpose{\psi_{1}(B A)} \transpose{A} A' \psi_{1}(B' A') B' \begin{bmatrix} g_{11}\\ g_{21} \end{bmatrix}
	\label{eq:geodesics_efficient}
\end{align}
Advantageously, if the multiplications are done in the right order this does not require forming any $n_i \times n_i$ matrices.

\bigskip

We now return to how to compute $\psi_{1}$.
We will do it similarly to how the matrix exponential is usually computed, using \emph{Padé approximation} and \emph{scaling and squaring}.
See Moler and van Loan~\cite[methods 2 and 3]{Moler03} and Higham~\cite[sections 10.3 and 10.7.4]{Higham08}.
Let $M$ be an $r \times r$ matrix and assume first that $\norm{M} \leq 1 / 2$.
Then $\psi_{1}(M)$ is approximated to double precision from its degree $(6, 6)$ Padé approximant~\cite[theorem 10.31]{Higham08}
\begin{align}\label{eq:pade_approximant}
	r_{66}(M) = \frac{
		1 + M / 26 + 5 M^2 / 156 + M^3 / 858 + M^4 / 5720 + M^5 / 205920 + M^6 / 8648640
	}{
		1 - 6 M / 13 + 5 M^2 / 52 - 5 M^3 / 429 + M^4 / 1144 - M^5 / 25740 + M^6 / 1235520
	}.
\end{align}
We prove this in \cref{lemma:pade_approximant}.

On the other hand, if $\norm{M} > 1 / 2$, then we do not use the Padé approximant directly.
Let $z$ be an integer such that $\norm{M} 2^{-z} \leq 1 / 2$.
Keeping in mind that these expressions are only shorthands for their Taylor series, we have
\begin{align}
	\psi_{1}(M) ={}& \frac{\operatorname{mexp}(M) - 1}{M}\nonumber\\
	={}& \frac{\operatorname{mexp}(2^{-z} M)^{2^z} - 1}{M}\nonumber\\
	={}& \frac{\operatorname{mexp}(2^{-z} M) - 1}{M} (\operatorname{mexp}(2^{-z} M) + 1) \cdots (\operatorname{mexp}(2^{-1} M) + 1)\nonumber\\
	={}& 2^{-z} \psi_{1}(2^{-z} M) (\operatorname{mexp}(2^{-z} M) + 1) \cdots (\operatorname{mexp}(2^{-1} M) + 1).%
	\label{eq:scaling_and_squaring}
\end{align}
In the second to last step, we used $x^2 - 1 = (x - 1) (x + 1)$ recursively.
This scaling and squaring step is similar to the algorithm proposed by Hochbruck, Lubich, and Selhofer~\cite{Hochbruck98}.

\paragraph{Counting the operations}
We now count the number of basic operations required to evaluate \cref{eq:geodesics_efficient}.
We use the same conventions as \cite[Table C.1]{Higham08}, where a $(a \times b) \times (b \times c)$ matrix multiplication is $2 a b c$ basic operations, and a $(a \times b) \times (b \times c)$ matrix division is $8 a b c / 3$ basic operations.
Terms that are $\order{n_i r}$ or $\order{r^{2}}$, such as adding $r \times r$ matrices, are ignored.

\begin{proposition}\label{prop:cp_efficient_geodesics}
	Given a tangent vector $X \in T_{p} \Sigma_r$, $\operatorname{exp}_{p}(X)$ can be estimated\footnote{%
		Meaning that everything is computed exactly except for $\psi_{1}$, which is Padé approximated to within double precision.
	}
	using
	\begin{align}
			\sum_{i = 1}^{d} \left[ \frac{110}{3} n_i r^{2} + (146 + 36 z_i) r^{3} + \order{n_i r + r^{2}} \right] \textrm{ basic operations}
	\end{align}
	where\footnote{This formula is valid for any norm, as long as it is the same as in \cref{sec:Error bound for Pade approximant}.} $z_i = \lceil \operatorname{log}_2 \norm{X_{i} \inverse{g_{i}}} \rceil + 2$.
\end{proposition}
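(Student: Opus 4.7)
The plan is to count the operations to evaluate \cref{eq:geodesics_efficient} mode by mode, and to sum the result. For each $i$, the ingredients are: (i) the factors $A, B$ of the rank $r$ decomposition \cref{eq:horizontal_rank_decomposition2} of $X_i \inverse{g_i}$ together with $\Gamma_{12}$ from \cref{eq:Gamma12}; (ii) the factors $A', B'$ of the rank $2r$ decomposition of $X_i \inverse{g_i} - \transpose{(X_i \inverse{g_i})}$; (iii) the $r \times r$ matrix $\psi_1(B A)$ and the $2 r \times 2 r$ matrix $\psi_1(B' A')$; and (iv) the matrix products in \cref{eq:geodesics_efficient}. I would bound the cost of each of these and add them up, paying attention only to the $n_i r^2$ and $r^3$ terms, since all $\order{n_i r + r^2}$ contributions (additions of block matrices, picking out columns, etc.) are absorbed in the error term.

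First I would handle $\Gamma_{12}$, $A$, $B$, $A'$, and $B'$. By the power series trick \cref{eq:Gamma12}, building $\Gamma_{12}$ only requires multiplications and inversions of $r \times r$ matrices plus a handful of $(n_i - r) \times r$ by $r \times r$ multiplications, so it is $\order{n_i r^2 + r^3}$. Similarly, the explicit expressions for $A$, $B$, $A'$, $B'$ in \cref{eq:horizontal_rank_decomposition2} are assembled from $\Gamma_{12}$, $g_{11}$, $g_{21}$, $X_{11}$, $X_{21}$ using a constant number of $(n_i \times r) \times (r \times r)$ multiplications; I would tally these using the $2abc$ convention from \cite[Table C.1]{Higham08}. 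The products $B A$ and $B' A'$, needed for the arguments of $\psi_1$, are a single $(r \times n_i) \times (n_i \times r)$ and a $(2r \times n_i) \times (n_i \times 2r)$ multiplication, contributing $2 n_i r^2$ and $8 n_i r^2$ basic operations respectively.

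Second I would estimate $\psi_1$. On an $r \times r$ matrix $M$ with $\norm{M} \leq 1/2$, the Padé approximant \cref{eq:pade_approximant} costs a fixed number of $r \times r$ multiplications plus one $r \times r$ division; using the $2 r^3$ and $8 r^3 / 3$ costs and optimizing the Horner-like evaluation of numerator and denominator gives a constant multiple of $r^3$ that I would compute exactly. The scaling and squaring recursion \cref{eq:scaling_and_squaring}, invoked $z_i$ times, adds $z_i$ applications of the squaring-style step $(\operatorname{mexp}(2^{-k} M) + 1) \cdot (\text{previous})$, each of which is a constant number of $r \times r$ multiplications, hence a $36 z_i r^3$ contribution (the factor 36 depending on exactly how one writes out the recursion). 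On the $2 r \times 2 r$ matrix $B' A'$ the same Padé evaluation costs $2^3 = 8$ times as much per step, which already accounts for most of the $r^3$ budget, and the squaring step similarly scales by a factor of $8$.

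Finally, I would evaluate \cref{eq:geodesics_efficient} from the inside out, so that no $n_i \times n_i$ matrix is ever formed: for example, the last term is computed as $\transpose{B} \bigl( \transpose{\psi_1(B A)} \bigl( \transpose{A} \bigl( A' \bigl( \psi_1(B' A') \bigl( B' [g_{11}; g_{21}] \bigr) \bigr) \bigr) \bigr) \bigr)$, which is a sequence of $(r \times n_i) \times (n_i \times r)$, $(r \times r) \times (r \times r)$, $(n_i \times r) \times (r \times r)$ multiplications whose cost I would tabulate. Summing the contributions over all four terms of \cref{eq:geodesics_efficient}, combining with the costs in the first two steps, and verifying that the $n_i r^2$ coefficients add to $110/3$ and the $r^3$ coefficients to $146 + 36 z_i$, completes the count.

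The main obstacle is the bookkeeping: there are many rectangular multiplications of different shapes, and getting the exact coefficient $110/3$ requires carefully pairing $2 a b c$ costs with $8 a b c / 3$ division costs (the $8/3$ coming from the single $r \times r$ division in the Padé approximant of $\psi_1$). The conceptual point—that every sub-step is either $\order{n_i r^2}$ or $\order{r^3}$, with the $z_i$-dependence concentrated entirely in scaling and squaring—is transparent; it is only the constants that require care.
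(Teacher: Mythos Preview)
Your plan is essentially the paper's own proof: an itemized operation count for each ingredient of \cref{eq:geodesics_efficient}, summed mode by mode, using the $2abc$ and $8abc/3$ conventions and the power-series trick so that every step is either $\order{n_i r^{2}}$ or $\order{r^{3}}$.

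Two bookkeeping slips to correct when you carry it out. First, the $36 z_i r^{3}$ does not come from the $r \times r$ scaling-and-squaring alone: that step contributes only $4(z_i - 1) r^{3}$ (two $r \times r$ multiplications per squaring level), while the $2r \times 2r$ version contributes $32(z_i - 1) r^{3}$; it is their \emph{sum} that gives the coefficient $36$. As you wrote it, taking the $r$ case to already give $36 z_i r^{3}$ and then multiplying by $8$ for the $2r$ case would overshoot by a factor of $9$. Second, the fractional $8/3$ that makes $110/3$ non-integer lives in the $n_i r^{2}$ column and comes from the rectangular divisions $g_{21}/g_{11}$ and $(\cdot)/g_{11}$ when building $\Gamma_{12}$ and $B$ via \cref{eq:Gamma12}, not from the Pad\'e quotient in \cref{eq:pade_approximant}, which is an $r \times r$ division and sits in the $r^{3}$ column.
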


See \cref{sec:Appendix: Counting the operations} for the proof.

This can be viewed as a tensorial version of Vandereycken's et al.~\cite{Vandereycken12} corollary 4.2.
We also mention that the effects of rounding errors in the Padé approximant and squaring step are not completely understood~\cite{Moler03}, and so we do not do a full error analysis.

\section{The Tucker manifold}%
\label{sec:Tucker tensors}

In \cref{prop:tucker_decomposition}, we used the matrix rank decomposition recursively, and since that decomposition is unique up to a change of basis in the inner vector space, we immedeately arrive at the following.

\begin{proposition}\label{prop:tucker_uniqueness}
	The Tucker decomposition is unique up to a change of basis in $\reals^{t_1}$, \dots, $\reals^{t_d}$.
	More precisely,
	\begin{align}
		\sum_{\alpha_1 = 1}^{t_1} \cdots \sum_{\alpha_d = 1}^{t_d} C'_{\alpha_1 \dots \alpha_d} (G'_1)_{k_1}{}^{\alpha_1} \cdots (G'_d)_{k_d}{}^{\alpha_d} =
		\sum_{\alpha_1 = 1}^{t_1} \cdots \sum_{\alpha_d = 1}^{t_d} C_{\alpha_1 \dots \alpha_d} (G_1)_{k_1}{}^{\alpha_1} \cdots (G_d)_{k_d}{}^{\alpha_d}
	\end{align}
	iff there are matrices $U_1 \in \mathrm{GL}(t_1)$, \dots, $U_d \in \mathrm{GL}(t_d)$ such that
	\begin{align}
		(G'_i)_{k_i}{}^{\alpha_i} ={}& \sum_{\beta = 1}^{t_i} (G_i)_{k_i}{}^{\beta} (U_i)_{\beta}{}^{\alpha_i},\quad i = 1, \dots, d\\
		(C')_{\alpha_1 \dots \alpha_d} ={}& \sum_{\beta_1 = 1}^{t_1} \cdots \sum_{\beta_d = 1}^{t_d} C_{\beta_1 \dots \beta_d} (\inverse{U_1})^{\beta_1}{}_{\alpha_1} \cdots (\inverse{U_d})^{\beta_d}{}_{\alpha_d}.
	\end{align}
\end{proposition}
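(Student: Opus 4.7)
The plan is to handle the two directions of the equivalence separately, leaning on the uniqueness of the matrix rank decomposition up to change of basis in its inner vector space, exactly as in the proof of \cref{prop:tucker_decomposition}.

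For the direction from matrices $U_i$ to equal decompositions, I would substitute the given expressions for $(G'_i)_{k_i}{}^{\alpha_i}$ and $C'_{\alpha_1 \dots \alpha_d}$ into the left-hand side and expand. After reordering the summations, each $(U_i)_{\beta_i}{}^{\alpha_i}$ coming from a $G'_i$ factor meets the matching $(\inverse{U_i})^{\gamma_i}{}_{\alpha_i}$ coming from $C'$, and the sum over $\alpha_i$ collapses to a Kronecker delta $\delta^{\gamma_i}_{\beta_i}$. Collapsing these deltas for $i = 1, \dots, d$ recovers the right-hand side.

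For the reverse direction I would argue mode by mode. Fix a mode $i$ and consider the mode-$i$ flattening
\[
	T \from \reals^{n_i} \to \reals^{n_1} \otimes \cdots \mathhat{\reals^{n_i}} \cdots \otimes \reals^{n_d},
\]
whose image is, by \cref{eq:tucker_rank}, a $t_i$-dimensional subspace $V_i$. Either Tucker decomposition writes this flattening as $G_i$ (respectively $G'_i$) composed with a linear map out of $\reals^{t_i}$, so the column space of each of $G_i$ and $G'_i$ contains $V_i$. Since the inner dimensions equal the multilinear rank, both $G_i$ and $G'_i$ have full column rank $t_i$ (as noted right after \cref{prop:tucker_decomposition}), and their columns therefore form bases of $V_i$. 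Hence there exists an invertible matrix $U_i \in \mathrm{GL}(t_i)$ with $(G'_i)_{k_i}{}^{\alpha_i} = \sum_{\beta} (G_i)_{k_i}{}^{\beta} (U_i)_{\beta}{}^{\alpha_i}$, giving the first claimed formula.

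Substituting this back, the equality of the two Tucker decompositions becomes an equality of Tucker decompositions with the same factors $G_1, \dots, G_d$ and cores $C$ and $\sum_{\alpha_1, \dots, \alpha_d} C'_{\alpha_1 \dots \alpha_d} (U_1)_{\beta_1}{}^{\alpha_1} \cdots (U_d)_{\beta_d}{}^{\alpha_d}$. Since each $G_i$ has full column rank, the map from cores to tensors defined by contracting with $(G_1, \dots, G_d)$ is injective (its matrix representation is the Kronecker product of full-column-rank matrices), so I can equate cores and then solve for $C'_{\alpha_1 \dots \alpha_d}$ by contracting with $\inverse{U_1}, \dots, \inverse{U_d}$ on the appropriate indices. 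This yields the claimed formula for $C'$. The only mildly delicate point is the injectivity of the core-to-tensor map, which is standard but worth explicitly invoking rather than glossing over; beyond that there is no real obstacle.
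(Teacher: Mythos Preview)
Your proposal is correct and follows the same route the paper takes: the paper does not give a separate proof but states, just before the proposition, that the result follows immediately from the uniqueness of the matrix rank decomposition up to change of basis in the inner space, applied mode by mode as in the proof of \cref{prop:tucker_decomposition}. Your argument is precisely a fleshed-out version of this, with the mode-$i$ flattening giving the $U_i$'s and the injectivity of the Kronecker product of full-column-rank $G_i$'s pinning down the core.
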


Many of the arguments in this section are the same as in \cref{sec:CP tensors}, so we do not repeat them here but just refer back.

\subsection{Smooth manifold}

Let $\inverse{\operatorname{mrank}}(t_1, \dots, t_d)$ denote the set of tensors with multilinear rank $(t_1, \dots, t_d)$.
Like $\inverse{\operatorname{ttrank}}(s_1, \dots, s_{d - 1})$ and $\inverse{\operatorname{rank}}(r)$, its closure is an algebraic variety.

\begin{lemma}
	Let $t_1 = t_2 \cdots t_d$.
	Then
	\begin{align}
		G \actson \Lambda_{t_1 \dots t_d} := \inverse{\operatorname{mrank}}(t_1, \dots, t_d)
	\end{align}
	is a transitive action.
\end{lemma}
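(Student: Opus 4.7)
The plan is to show transitivity by exhibiting, for any pair $T, T' \in \Lambda_{t_1 \ldots t_d}$, an element $h \in G$ with $h \cdot T = T'$. I would split the task in two: first align the cores of two Tucker decompositions using the non-uniqueness from \cref{prop:tucker_uniqueness}, and then align the factor matrices using the $G$ action.

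First I would verify that the action is well-defined, i.e., that $G$ preserves multilinear rank. This is essentially the matrix case: each $g_i$ is invertible, so pre- or post-composing the $i$th flattening by $g_i$ (resp.\ by the tensor product of the other $g_j$) does not change its rank. Then I would take Tucker decompositions $T = C \times_1 G_1 \times_2 \cdots \times_d G_d$ and $T' = C' \times_1 G_1' \times_2 \cdots \times_d G_d'$ as in \cref{prop:tucker_decomposition}, where each of the $G_i, G_i'$ has full column rank $t_i$ and the cores $C, C' \in \reals^{t_1} \otimes \dots \otimes \reals^{t_d}$ themselves have multilinear rank $(t_1, \dots, t_d)$.

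The key use of the hypothesis $t_1 = t_2 \cdots t_d$ enters in aligning the cores. The mode-$1$ unfolding of $C$ is a $t_1 \times (t_2 \cdots t_d) = t_1 \times t_1$ matrix, and it is invertible because $C$ has multilinear rank $t_1$ in the first mode; and likewise for $C'$. Hence $W_1 \definedas C_{(1)} \inverse{(C'_{(1)})} \in \mathrm{GL}(t_1)$ satisfies $C = C' \times_1 W_1$. By \cref{prop:tucker_uniqueness} (with $U_1 = \inverse{W_1}$ and $U_2 = \dots = U_d = 1$), the expression
\begin{align}
	T' = C \times_1 (G_1' \inverse{W_1}) \times_2 G_2' \times_3 \cdots \times_d G_d'
\end{align}
is another valid Tucker decomposition of $T'$ which now shares its core with $T$. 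Since both $G_i$ and $G_i' \inverse{W_1}$ (resp.\ $G_i'$ for $i \geq 2$) are $n_i \times t_i$ matrices of full column rank, they are related by some $h_i \in \mathrm{GL}(n_i)$, and setting $h = (h_1, \dots, h_d)$ gives $h \cdot T = T'$.

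The step I expect to be the main obstacle is the bookkeeping bridging core non-uniqueness and the $G$ action: the action does not alter the core directly, so the change of core from $C'$ to $C$ must be absorbed into the factor matrix $G_1'$ via \cref{prop:tucker_uniqueness}. It is also worth checking that the hypothesis is used sharply: for generic multilinear rank, cores of rank $(t_1, \dots, t_d)$ form a moduli space of positive dimension under the $\mathrm{GL}(t_1) \times \dots \times \mathrm{GL}(t_d)$ action (e.g.\ the hyperdeterminant provides an invariant when $d = 3$ and $t_1 = t_2 = t_3 = 2$), so alignment by a single $W_1$ would fail; the condition $t_1 = t_2 \cdots t_d$ collapses this moduli space to a point.
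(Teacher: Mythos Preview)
Your proposal is correct and uses essentially the same idea as the paper: the hypothesis $t_1 = t_2 \cdots t_d$ makes the mode-$1$ unfolding of the core a square invertible matrix, so the core can be absorbed into the first factor, after which full-column-rank factors are matched by elements of $\mathrm{GL}(n_i)$. The only presentational difference is that the paper fixes a single base point (core equal to the identity, factors $E_i = I_{n_i \times t_i}$) and writes down the group element $g = (\begin{bmatrix} G_1 & \mathbar{G}_1 \end{bmatrix} \begin{bmatrix} C & \\ & 1 \end{bmatrix}, \begin{bmatrix} G_2 & \mathbar{G}_2 \end{bmatrix}, \dots)$ explicitly, whereas you work with two arbitrary points and route the core alignment through \cref{prop:tucker_uniqueness}.
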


\begin{proof}
	Let $E_i$ be the identity matrix $I_{n_i \times t_i}$ seen as an element of $\reals^{n_i} \otimes \reals^{t_i}$ and let $I$ be the identity matrix $I_{t_1 \times t_1}$ seen as an element of $\reals^{t_1} \otimes \dots \otimes \reals^{t_d}$.
	Then define
	\begin{align}
		T_{k_1 \dots k_d} = \sum_{\alpha_1, \dots, \alpha_d} I_{\alpha_1 \dots \alpha_d} (E_1)_{k_1}{}^{\alpha_1} \cdots (E_d)_{k_d}{}^{\alpha_d}.
	\end{align}
	Now, fix $C$ and $G_1$, \dots, $G_i$.
	By a previous comment, a Tucker decomposition with inner dimensions $t_1$, \dots, $t_d$ has $G_i$ with linearly independent columns.
	So let $\mathbar{G}_i \from \reals^{n_i \times (n_i - t_i)}$ be a basis completion to $G_i$.
	Moreover, the unfolding $C \from \reals^{t_1} \to \reals^{t_2} \otimes \dots \otimes \reals^{t_d}$ is an invertible $t_1 \times t_1$ matrix.
	The tensor
	\begin{align}
		S_{k_1 \dots k_d} = \sum_{\alpha_1, \dots, \alpha_d} C_{\alpha_1 \dots \alpha_d} (G_1)_{k_1}{}^{\alpha_1} \cdots (G_d)_{k_d}{}^{\alpha_d}.
	\end{align}
	is thus reached by the group element $g = (\begin{bmatrix} G_1 & \mathbar{G}_1 \end{bmatrix} \begin{bmatrix} C & \\ & 1 \end{bmatrix}, \begin{bmatrix} G_2 & \mathbar{G}_2 \end{bmatrix}, \dots, \begin{bmatrix} G_d & \mathbar{G}_d \end{bmatrix})$.
\end{proof}

There are other cases than $t_1 = t_2 \cdots t_d$ where $\inverse{\operatorname{mrank}}(t_1, \dots, t_d)$ has an open and dense orbit.
However, describing those orbits is more work and involves the so-called \emph{castling transform} of the factors, which generalizes the statement that $\reals^{p} \otimes \reals^{q} \otimes \reals^{r}$ has an open orbit iff $\reals^{p} \otimes \reals^{q} \otimes \reals^{p q - r}$ has an open orbit.
See Venturelli~\cite{Venturelli18} or Landsberg~\cite[Section 10.2.2]{Landsberg12} for details.
We also repeat the observation from the introduction: that there is typically more degrees of freedom in $\reals^{t_1} \otimes \dots \otimes \reals^{t_d}$, namely $t_1 \cdots t_d$ degrees, than in $\mathrm{GL}(t_1) \times \dots \times \mathrm{GL}(t_d)$, namely $t_1^{2} + \dots + t_d^{2}$ degrees.
This imposes the restriction
\begin{align}
	t_{1} \cdots t_{d} \leq t_{1}^{2} + \dots + t_{d}^{2}.
\end{align}
Furthermore, since $t_1 > t_2 \cdots t_d$ is not a possible multilinear rank, we also have the restriction
\begin{align}
	t_{1} \leq t_{2} \cdots t_{d}.
\end{align}
Solving for $t_{1}$, we find that it must satisfy
\begin{align}
	\frac{t_2 \cdots t_d + \sqrt{(t_2 \cdots t_d)^{2} - 4 (t_2^{2} + \dots + t_d^{2})}}{2} \leq{}& t_1 \leq t_2 \cdots t_d.
\end{align}
This domain is typically very small.
For $d = 3$, $t_2 = t_3 = 10$ for example, we have $98 \leq t_1 \leq 100$.
We therefore settle for describing the case $t_1 = t_2 \dots t_d$.

From \cref{prop:tucker_uniqueness} we can directly compute the stabilizer.

\begin{lemma}\label{prop:tucker_stabilizer}
	The stabilizer $H$ of $G \actson \Lambda_{t_1 \dots t_d}$ consists of elements of the form
	\begin{align}\label{eq:tucker_stabilizer}
		\begin{bmatrix}
			A_2^{-\mathsf{T}} \otimes \dots \otimes A_d^{-\mathsf{T}} & M_1\\
			& B_1
		\end{bmatrix}
		\times
		\begin{bmatrix}
			A_2 & M_2\\
			& B_2
		\end{bmatrix}
		\times \dots \times
		\begin{bmatrix}
			A_d & M_d\\
			& B_d
		\end{bmatrix}
	\end{align}
	where the $A_i$ are invertible $t_i \times t_i$ matrices, the $B_i$ are invertible $(n_i - t_i) \times (n_i - t_i)$ matrices, and $M_i$ are $t_i \times (n_i - t_i)$ matrices.
\end{lemma}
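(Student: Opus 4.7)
The plan is to apply \cref{prop:tucker_uniqueness} directly to the stabilizer equation $g \cdot T = T$, taking $T$ to be the fixed point constructed in the proof of the previous lemma, and to read off the block structure of each $g_i$ from the resulting constraints. Concretely, $T$ is presented as a Tucker decomposition with core $I$ and factor matrices $E_1, \dots, E_d$, while $g \cdot T$ is the Tucker decomposition with the same core $I$ but factor matrices $g_1 E_1, \dots, g_d E_d$. By \cref{prop:tucker_uniqueness}, $g \cdot T = T$ is therefore equivalent to the existence of matrices $U_i \in \mathrm{GL}(t_i)$ with $g_i E_i = E_i U_i$ for all $i$ and $I = I \cdot (\inverse{U_1}, \dots, \inverse{U_d})$.

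Second, I would observe that $g_i E_i = E_i U_i$ says the first $t_i$ columns of $g_i$ are $\begin{bmatrix} U_i \\ 0 \end{bmatrix}$, so each $g_i$ must be block-upper-triangular of the form $\begin{bmatrix} U_i & M_i \\ 0 & B_i \end{bmatrix}$ with $M_i$ arbitrary and $B_i$ invertible (the latter because $g_i$ is). Setting $A_i = U_i$ for $i \geq 2$ already matches the second through last factors of \cref{eq:tucker_stabilizer}, so only the top-left block $U_1$ of $g_1$ remains to be determined.

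Third, I would exploit the hypothesis $t_1 = t_2 \cdots t_d$ to identify $\reals^{t_1}$ with $\reals^{t_2} \otimes \dots \otimes \reals^{t_d}$, under which the core $I$ becomes the identity linear map between them and hence its mode-$1$ unfolding is the $t_1 \times t_1$ identity matrix. Unfolding the core-invariance relation $I = I \cdot (\inverse{U_1}, \dots, \inverse{U_d})$ along mode $1$ then gives $\inverse{U_1} \cdot I_{t_1} \cdot (U_2 \otimes \dots \otimes U_d)^{-\mathsf{T}} = I_{t_1}$, which rearranges to $U_1 = A_2^{-\mathsf{T}} \otimes \dots \otimes A_d^{-\mathsf{T}}$. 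Substituting back into the block form of $g_1$ reproduces the first factor in \cref{eq:tucker_stabilizer}. The converse direction is automatic from the \enquote{iff} in \cref{prop:tucker_uniqueness}: any matrix of the stated form supplies $U_i$'s that satisfy the two conditions by construction.

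The main obstacle I anticipate is the third step, where one has to track transposes and Kronecker-ordering conventions carefully to identify the mode-$1$ unfolding of the $d$-th order identity tensor with the usual $t_1 \times t_1$ identity matrix, and to correctly describe how $(\inverse{U_1}, \dots, \inverse{U_d})$ acts on it. Everything else is an immediate consequence of \cref{prop:tucker_uniqueness}.
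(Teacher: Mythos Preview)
Your proposal is correct and follows exactly the approach the paper indicates: the paper simply says \enquote{From \cref{prop:tucker_uniqueness} we can directly compute the stabilizer} without giving any details, and you have spelled out precisely that computation. Your three steps---matching the two Tucker decompositions of $T$ and $g\cdot T$ via \cref{prop:tucker_uniqueness}, reading off the block-upper-triangular form of each $g_i$ from $g_i E_i = E_i U_i$, and solving the core-invariance constraint via the mode-$1$ unfolding to get $U_1 = A_2^{-\mathsf{T}}\otimes\dots\otimes A_d^{-\mathsf{T}}$---are exactly the natural way to make the paper's one-line justification explicit.
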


\begin{theorem}\label{thm:tucker_quotient}
	The set of tensors with multilinear rank $(t_{1}, \dots, t_{d})$, where $t_{1} = t_{2} \cdots t_{d}$, is a smooth homogeneous manifold,
	\begin{align}
		\Lambda_{t_1 \dots t_d} = G / H.
	\end{align}
\end{theorem}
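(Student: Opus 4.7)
The plan is to mirror the proof of \cref{thm:cp_quotient} and obtain the theorem as an essentially immediate consequence of the two lemmas that precede it. First, the transitivity lemma above gives $\Lambda_{t_1 \dots t_d} = G \cdot T$, where $T$ is the base point with decomposition factors $E_1, \dots, E_d$ and core $I$. Second, \cref{prop:tucker_stabilizer} has identified the stabilizer $H$ of $T$ inside $G$ explicitly as the set of block upper triangular tuples \cref{eq:tucker_stabilizer}. Taken together, these say precisely that the $G$-action on $\Lambda_{t_1 \dots t_d}$ is transitive with isotropy subgroup $H$.

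The remaining step is to invoke the standard orbit–stabilizer theorem for smooth Lie group actions, as summarized in \cref{sub:Homogeneous spaces} following \cite[chapters 7 and 21]{Lee13}: if a Lie group $G$ acts smoothly and transitively on a set and $H$ is the stabilizer of a point, then, provided $H$ is a closed Lie subgroup, the orbit map $G/H \to G \cdot T$, $gH \mapsto g \cdot T$, is a diffeomorphism, and $G/H$ carries a canonical smooth homogeneous manifold structure. Applying this to our situation immediately yields $\Lambda_{t_1 \dots t_d} \cong G/H$ as a smooth homogeneous manifold.

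The only thing that is not already stated explicitly in the preceding lemmas is that $H$ is closed in $G$. I would dispatch this in one line: $H$ is the preimage of $\{T\}$ under the continuous (indeed polynomial) orbit map $g \mapsto g \cdot T$, so it is automatically closed; alternatively, the block upper triangular form \cref{eq:tucker_stabilizer} together with the relation $A_1 = A_2^{-\mathsf{T}} \otimes \dots \otimes A_d^{-\mathsf{T}}$ visibly cuts out a closed subvariety of $G$.

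Since the real content of the theorem has already been absorbed by the transitivity statement and the stabilizer computation, there is no genuine obstacle in this final step; the proof is essentially a one-paragraph invocation of the orbit–stabilizer correspondence, written in the same style as \cref{thm:cp_quotient}. The one place where one might be tempted to say more is the comparison of dimensions, to sanity-check that $\dim G - \dim H$ matches the expected dimension $t_1 \cdots t_d + \sum_i (n_i - t_i) t_i$ of $\Lambda_{t_1 \dots t_d}$; I would include this as a brief remark rather than a proof step.
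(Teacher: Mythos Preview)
Your proposal is correct and matches the paper's approach: the paper states \cref{thm:tucker_quotient} immediately after the transitivity lemma and \cref{prop:tucker_stabilizer} without further proof, exactly as with \cref{thm:cp_quotient}, relying on the general orbit--stabilizer framework from \cref{sub:Homogeneous spaces}. Your additional remarks on the closedness of $H$ and the dimension count are natural elaborations that the paper leaves implicit.
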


\subsection{Representatives}

Similarly to \cref{sub:cp_representatives}, if $g = (g_1, \dots, g_d) \in G$, then we only need to store the first $t_i$ columns of $g_i$.

\subsection{Riemannian manifold}

Similarly to \cref{sub:cp_riemannian_manifold}, we can take the derivative of the expression \cref{eq:tucker_stabilizer} in \cref{prop:tucker_stabilizer} to get $H$'s Lie algebra, $\mathfrak{h}$.
It consists of elements $Y = (Y_1, \dots, Y_d)$ where the $Y_i$ are on block form
\begin{align}
	Y_1 ={}&
	\begin{bmatrix}
		-K_2^{\mathsf{T}} \otimes 1 \otimes \dots \otimes 1 - \dots - 1 \otimes \dots \otimes 1 \otimes K_d^{\mathsf{T}} & *\\
		& *
	\end{bmatrix},\nonumber\\
	Y_i ={}&
	\begin{bmatrix}
		K_i & *\\
		& *
	\end{bmatrix},\quad 2 \leq i \leq d,
\end{align}
for arbitrary $t_i \times t_i$ matrices $K_i$.

$\mathfrak{h}$'s orthogonal complement, $\mathfrak{m}$, consists of elements $X = (X_1, \dots, X_d)$ where the $X_i$ are on block form
\begin{align}
	X_1 ={}&
	\begin{bmatrix}
		L_1 & \\
		* & 0
	\end{bmatrix},\nonumber\\
	X_i ={}&
	\begin{bmatrix}
		L_i & \\
		* & 0
	\end{bmatrix},\quad 2 \leq i \leq d,
\end{align}
such that
\begin{align}\label{eq:tucker_invariant_subspace_candidate}
	L_i ={}& \operatorname{tr}_i L_1,\quad 2 \leq i \leq d,
\end{align}
where $\operatorname{tr}_{i}(A_1 \otimes \dots \otimes A_d) = (\operatorname{tr} A_1) \cdots \mathhat{\operatorname{tr} A_i} \cdots (\operatorname{tr} A_d) \transpose{A_i}$.
The easiest way to see this restriction on $L_i$ is to write the inner product as
\begin{align}\label{eq:tucker_inner_product}
	\innerproduct{Y}{X} ={}& \innerproduct{K_2}{L_1 - \operatorname{tr}_2 L_1} + \innerproduct{K_3}{L_2 - \operatorname{tr}_2 L_1} + \dots + \innerproduct{K_d}{L_{d} - \operatorname{tr}_{d} L_{1}}
\end{align}
and note that this should hold for all $K_i$ separately.
Note that $L_2$, \dots, $L_d$ are completely determined by $L_1$.

Like in \cref{sub:cp_riemannian_manifold}, we consider the right-invariant metric on $G$ induced by the Euclidean inner product on $\mathfrak{g}$, and define the metric on $\Lambda_{t_1 \dots t_d}$ by demanding that $\pi \from G \to \Lambda_{t_1 \dots t_d}$ is a Riemannian submersion.
We do not give a full description of the horizontal space at a general point, but just note that, for each $i$, the same argument that was used in \cref{sub:cp_riemannian_manifold} can be used to derive a rank $t_{i}$ decomposition similar to \cref{eq:horizontal_rank_decomposition}.

\subsection{Riemannian homogeneous manifold}

\begin{proposition}
	$\Lambda_{t_1 \dots t_d}$ is reductive if and only if $n_1 = t_1$, \dots, $n_d = t_d$.
	Moreover, when $\Lambda_{t_1 \dots t_d}$ is reductive, $\mathfrak{m}$ is an invariant subspace.
\end{proposition}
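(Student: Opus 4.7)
The plan is to mirror the structure of \cref{prop:cp_reductive}, splitting into forward and converse directions. The key new ingredient is handling the Kronecker-product constraint on the mode-$1$ factor of $H$ and the trace relation $L_i = \operatorname{tr}_i L_1$ defining $\mathfrak{m}$.

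For the forward direction, assume $n_i = t_i$ for all $i$. Then the $B_i$ and $M_i$ blocks in \cref{eq:tucker_stabilizer} disappear and $H$ consists simply of elements $h = (A_2^{-\mathsf{T}} \otimes \dots \otimes A_d^{-\mathsf{T}}, A_2, \dots, A_d)$. Given $X = (L_1, \operatorname{tr}_2 L_1, \dots, \operatorname{tr}_d L_1) \in \mathfrak{m}$, I would compute $h X \inverse{h}$ componentwise: the mode-$1$ component becomes $L'_1 = (A_2^{-\mathsf{T}} \otimes \dots \otimes A_d^{-\mathsf{T}}) L_1 (A_2^{\mathsf{T}} \otimes \dots \otimes A_d^{\mathsf{T}})$, and the mode-$i$ component ($i \geq 2$) becomes $L'_i = A_i (\operatorname{tr}_i L_1) \inverse{A_i}$. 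It then remains to verify that $L'_i = \operatorname{tr}_i L'_1$. By linearity it suffices to check this on elementary tensors $L_1 = B_2 \otimes \dots \otimes B_d$, where the Kronecker conjugation decouples and cyclicity of the trace gives $\operatorname{tr}(A_j^{-\mathsf{T}} B_j A_j^{\mathsf{T}}) = \operatorname{tr}(B_j)$ for $j \neq i$, while the remaining factor transposes to $A_i \transpose{B_i} \inverse{A_i}$. This matches $L'_i$.

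For the converse, assume $n_i > t_i$ for some $i$ and suppose an invariant complement $\mathfrak{p}$ exists; I will derive a contradiction. As in the CP case, the projection $\mathfrak{p} \to \mathfrak{m}$ is a bijection, so elements of $\mathfrak{p}$ are determined by their $X_{11}$ and $X_{21}$ blocks. I take $h \in H$ equal to identity in every mode except mode $i$, where it is $\begin{bmatrix} 1 & M_i \\ & 1 \end{bmatrix}$; when $i \geq 2$ we set $A_j = 1$ for all $j$ so the stabilizer constraint on mode $1$ is satisfied, and when $i = 1$ we use the free $M_1$ block directly. Applied to an $X \in \mathfrak{p}$ with $X_{21}^{(i)} = 0$, comparing $h X \inverse{h}$ with $\inverse{h} X h$ (both lying in $\mathfrak{p}$ with identical $\mathfrak{m}$-components) forces $X_{11}^{(i)} M_i = M_i X_{22}^{(i)}$ for arbitrary $M_i$, hence $X_{11}^{(i)}$ is a scalar multiple of the identity. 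Since $X_{21}^{(i)} \neq 0$ can be handled by subtracting off a suitable auxiliary element of $\mathfrak{p}$ exactly as in the proof of \cref{prop:cp_reductive}, this restriction applies to every $X \in \mathfrak{p}$.

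The dimension count finishes the contradiction: for $i = 1$, constraining $L_1$ to a multiple of the identity cuts its $t_1^2$ free parameters down to $1$, which already fails when $t_1 \geq 2$; for $i \geq 2$, the same cut on $L_i$ is incompatible with $L_i = \operatorname{tr}_i L_1$ ranging over all $t_i \times t_i$ matrices as $L_1$ varies. The main obstacle I anticipate is the edge case $t_1 = \dots = t_d = 1$, where each $L_i$ is a $1 \times 1$ matrix and the restriction is vacuous; here I would adapt the second step of the CP proof, considering elements with $X_{11}^{(i)} = 0$ and using the fact that we may add scalar multiples of the identity freely, to force $X_{21}^{(i)} = 0$ and so contradict the dimension of $\mathfrak{m}$ when $n_i > 1$.
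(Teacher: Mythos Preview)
Your proposal follows the paper's approach exactly: the paper's converse is literally the one line ``same argument as in \cref{prop:cp_reductive}'', and your forward direction (checking $L_i' = \operatorname{tr}_i L_1'$ via elementary tensors and cyclicity of trace) is the same computation the paper sketches.

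There is, however, a genuine gap in your dimension count for the converse when the offending mode is $i \geq 2$. The conjugation argument constrains $X_{11}^{(i)}$, the actual $(1,1)$ block of an element $X \in \mathfrak{p}$, to be a scalar multiple of the identity. You then phrase the contradiction as a constraint on $L_i$, but $L_i$ is the $(1,1)$ block of the $\mathfrak{m}$-projection of $X$, and these differ by the $\mathfrak{h}$-component $K_i$. In the CP case this distinction is harmless because $\mathfrak{h}$'s $(1,1)$ block in each mode is only diagonal, so ``scalar $+$ diagonal'' is still a proper subspace and $\mathfrak{p} + \mathfrak{h}$ cannot fill $\mathfrak{g}$. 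In the Tucker case with $i \geq 2$, $\mathfrak{h}$'s $(1,1)$ block in mode $i$ is all of $\mathfrak{gl}(t_i)$ (the $K_i$ is unconstrained), so ``scalar $+$ arbitrary $=$ arbitrary'' and no contradiction follows from that block alone. The argument does go through cleanly when $i = 1$, since there $\mathfrak{h}$'s $(1,1)$ block is constrained to the form $-K_2^{\mathsf{T}} \otimes 1 - \dots - 1 \otimes K_d^{\mathsf{T}}$, which together with scalars spans a proper subspace of $\mathfrak{gl}(t_1)$. For $i \geq 2$ you need an additional step---for instance, pushing the argument further to force $X_{21}^{(i)} = 0$ for all $X \in \mathfrak{p}$, which then contradicts $\mathfrak{p}$ being a complement. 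The paper's one-line deferral to the CP proof glosses over this same subtlety.
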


\begin{proof}
	Assume $n_1 = t_1$, \dots, $n_d = t_d$ and let $h = (\transpose{(A_2 \otimes \dots \otimes A_d)}, A_2, \dots, A_d) \in H$.
	We have to show that \cref{eq:tucker_invariant_subspace_candidate} is preserved by $L_i \mapsto L_i' = A_i L_i \inverse{A_i}$.
	This can be seen from
	\begin{align}
		L_i' ={}& A_i L_i \inverse{A_i}\nonumber\\
		={}& A_i (\operatorname{tr}_i L_1) \inverse{A_i}\nonumber\\
		={}& \operatorname{tr}_i [(A_1 \otimes \dots \otimes A_d)^{-\mathsf{T}} L_1 \transpose{(A_1 \otimes \dots \otimes A_d)}] \nonumber\\
		={}& \operatorname{tr}_i L_1'.
	\end{align}
	
	If $n_i \neq t_i$ for some $i$, then it is possible to show that $\Lambda_{t_1 \dots t_d}$ is not reductive with the same argument as in \cref{prop:cp_reductive}.
\end{proof}

\subsection{Geodesics}

By an argument completely analogous to \cref{prop:cp_efficient_geodesics}, we have the following result.

\begin{proposition}
	Given a tangent vector $X \in T_p \Lambda_{t_1 \dots t_d}$, $\operatorname{exp}_p(X)$ can be estimated using
	\begin{align}
		\sum_{i = 1}^{d} \left[ \frac{110}{3} n_i t_i^{2} + (146 + 36 z_i) t_i^{3} + \order{n_i t_i + t_i^{2}} \right] \textrm{ basic operations}
	\end{align}
	where $z_i = \lceil \operatorname{log}_{2}\norm{X_i \inverse{g_{i}}} \rceil + 2$.
\end{proposition}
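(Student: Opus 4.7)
The plan is to imitate the argument of \cref{prop:cp_efficient_geodesics} mode by mode, since $\pi \from G \to \Lambda_{t_1 \dots t_d}$ is a Riemannian submersion and so geodesics on $\Lambda_{t_1 \dots t_d}$ lift to horizontal geodesics on $G$. Specifically, it suffices to compute, for each $i$, the geodesic on $\mathrm{GL}(n_i)$ given by \cref{eq:geodesics_on_GL(n)} starting from $g_i$ with initial velocity $X_i$, and then show that the combined cost matches the stated count with $r$ replaced by $t_i$.

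The essential structural input is that, at any point $g \in G$, a horizontal vector $X = (X_1, \dots, X_d)$ admits, in each mode $i$, a rank $t_i$ decomposition of the form $X_i \inverse{g_i} = A_i B_i$ with $A_i \in \reals^{n_i \times t_i}$ and $B_i \in \reals^{t_i \times n_i}$, completely analogous to \cref{eq:horizontal_rank_decomposition2}. This follows by repeating the calculation of \cref{sub:cp_riemannian_manifold} using the block structure of $\mathfrak{h}$ established here, $\mathfrak{m}$ having lower-left rectangular blocks of size $t_i$ and the upper-right blocks being determined by an analogous $\Gamma_{12}$-matrix. I would do this verification once, as it is the only place a reader needs reassurance that the Tucker case is not subtly different.

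Given the rank $t_i$ factorization, the power series trick gives
\begin{align}
    \operatorname{mexp}(X_i \inverse{g_i}) ={}& 1 + A_i \psi_1(B_i A_i) B_i,\\
    \operatorname{mexp}(X_i \inverse{g_i} - \transpose{(X_i \inverse{g_i})}) ={}& 1 + A'_i \psi_1(B'_i A'_i) B'_i,
\end{align}
where $A'_i \in \reals^{n_i \times 2 t_i}$, $B'_i \in \reals^{2 t_i \times n_i}$, so the arguments of $\psi_1$ are $t_i \times t_i$ and $2 t_i \times 2 t_i$ matrices respectively. Substituting these into \cref{eq:geodesics_on_GL(n)} and keeping only the first $t_i$ columns (since by \cref{sub:cp_representatives} a representative only requires the first $t_i$ columns of $g_i$) yields a formula of the same shape as \cref{eq:geodesics_efficient}. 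The $\psi_1$ values are then evaluated using the degree $(6,6)$ Padé approximant \cref{eq:pade_approximant} combined with the scaling-and-squaring identity \cref{eq:scaling_and_squaring}, exactly as in the CP case.

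The operation count is then identical to that carried out in \cref{sec:Appendix: Counting the operations}, with every occurrence of $r$ replaced by $t_i$: the large $n_i \times t_i$ times $t_i \times t_i$ and $t_i \times n_i$ multiplications contribute the $\frac{110}{3} n_i t_i^{2}$ term, the Padé approximant contributes a constant multiple of $t_i^{3}$, and each squaring step in the scaling-and-squaring procedure contributes $36 t_i^{3}$, giving the $(146 + 36 z_i) t_i^{3}$ term. Summing over $i$ yields the stated bound. The only step requiring any genuine work is checking the horizontal decomposition in each mode; the remainder is a transcription of the CP proof, which is why I would state the result by analogy rather than reproducing the arithmetic.
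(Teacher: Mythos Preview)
Your proposal is correct and takes essentially the same approach as the paper, which proves this proposition simply by stating that the argument is completely analogous to \cref{prop:cp_efficient_geodesics}. You actually supply more detail than the paper does, correctly identifying that the only substantive point is the rank-$t_i$ factorization of $X_i \inverse{g_i}$ in each mode, after which the operation count from \cref{sec:Appendix: Counting the operations} transfers verbatim with $r \to t_i$.
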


\section{The tensor train manifold}%
\label{sec:Tensor trains}

\begin{proposition}\label{prop:tt_uniqueness}
	The TT decomposition is unique up to a change of basis in $\reals^{s_1}$, \dots, $\reals^{s_{d - 1}}$.
	More precisely,
	\begin{align}
		\sum_{\alpha_1}^{s_1} \cdots \sum_{\alpha_{d - 1}}^{s_{d - 1}} (F_1')_{k_1 \alpha_1} (F_2')^{\alpha_1}{}_{k_2 \alpha_2} \cdots (F_d')^{\alpha_{d - 1}}{}_{k_d} = \sum_{\alpha_1}^{s_1} \cdots \sum_{\alpha_{d - 1}}^{s_{d - 1}} (F_1)_{k_1 \alpha_1} (F_2)^{\alpha_1}{}_{k_2 \alpha_2} \cdots (F_d)^{\alpha_{d - 1}}{}_{k_d}
	\end{align}
	iff there are matrices $U_1 \in \mathrm{GL}(s_1)$, \dots, $U_{d - 1} \in \mathrm{GL}(s_{d - 1})$ such that
	\begin{align}
		(F_1')_{k_i \alpha_i} ={}& \sum_{\beta = 1}^{s_1}  (F_1)_{k_1 \beta} (\inverse{U_{2}})^{\beta}{}_{\alpha_1},\\
		(F_i')^{\alpha_{i - 1}}{}_{k_i \alpha_i} ={}& \sum_{\beta = 1}^{s_{i - 1}} \sum_{\gamma = 1}^{s_i} (U_{i - 1})_{\beta}{}^{\alpha_{i - 1}} (F_i)^{\beta}{}_{k_i \gamma} (\inverse{U_{i}})^{\gamma}{}_{\alpha_i}, \quad 2 \leq i \leq d - 1,\\
		(F_d')^{\alpha_{d - 1}}{}_{k_d} ={}& \sum_{\beta = 1}^{s_{d - 1}} (U_{d - 1})_{\beta}{}^{\alpha_{d - 1}} (F_d)^{\beta}{}_{k_d}.
	\end{align}
\end{proposition}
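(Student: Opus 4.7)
The sufficiency direction is a routine substitution: inserting the transformed cores into the right-hand side of \cref{eq:tt_decomposition} produces, for each internal index $\alpha_i$, a factor $\sum_{\gamma} (\inverse{U_i})^{\alpha_i}{}_{\gamma} (U_i)_{\gamma}{}^{\alpha_i'} = \delta^{\alpha_i}_{\alpha_i'}$ after the contractions, so all the gauge matrices telescope away and the tensor is unchanged. The work is entirely in the converse.

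My plan for necessity is to exploit the fact that, by the definition \cref{eq:tt_rank} of TT rank, the $i$-th unfolding
\begin{align*}
	T_{(k_1 \cdots k_i)(k_{i + 1} \cdots k_d)}
\end{align*}
is a matrix of rank exactly $s_i$. Given a TT decomposition of $T$ with inner dimensions $s_1, \dots, s_{d - 1}$, define the left partial contraction $L_i$ of size $(n_1 \cdots n_i) \times s_i$ by contracting the cores $F_1, \dots, F_i$ along $\alpha_1, \dots, \alpha_{i - 1}$, and the right partial contraction $R_i$ of size $s_i \times (n_{i + 1} \cdots n_d)$ by contracting $F_{i + 1}, \dots, F_d$. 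Then the $i$-th unfolding factors as $L_i R_i$, and since the product has rank $s_i$ while both factors have only $s_i$ columns/rows, each of $L_i$ and $R_i$ has full rank $s_i$. Given two TT decompositions of the same $T$, the standard uniqueness of rank-minimal matrix factorizations produces a unique invertible $U_i \in \mathrm{GL}(s_i)$ with
\begin{align*}
	L_i' = L_i \inverse{U_i}, \qquad R_i' = U_i R_i,
\end{align*}
for each $i = 1, \dots, d - 1$.

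The main task is then to check that these $U_i$'s, extracted from the different unfoldings independently, combine coherently to yield the claimed transformations on the individual cores. For the endpoint cores this is immediate: $L_1 = F_1$ gives the formula for $F_1'$, and $R_{d - 1} = F_d$ (viewed as an $s_{d - 1} \times n_d$ matrix) gives the formula for $F_d'$. The substantive step is the middle cores, where I would use the recursion
\begin{align*}
	(L_i)_{(k_1 \cdots k_{i - 1}), k_i, \alpha_i} = \sum_{\alpha_{i - 1}} (L_{i - 1})_{(k_1 \cdots k_{i - 1}), \alpha_{i - 1}} (F_i)^{\alpha_{i - 1}}{}_{k_i \alpha_i},
\end{align*}
combined with its primed analogue and the relations $L_i' = L_i \inverse{U_i}$, $L_{i - 1}' = L_{i - 1} \inverse{U_{i - 1}}$. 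Because $L_{i - 1}$ has full column rank, we may cancel it on the left (e.g.\ by applying a left inverse) to obtain the pointwise identity claimed for $F_i'$, with $U_{i - 1}$ acting on $\alpha_{i - 1}$ and $\inverse{U_i}$ acting on $\alpha_i$.

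The only subtle point I anticipate is bookkeeping: making sure that the $U_i$ produced from the $i$-th unfolding is literally the same matrix that appears both in the identity derived from comparing $L_i$ with $L_i'$ and in the identity derived from the left-cancellation at level $i + 1$. This is forced by uniqueness of the rank factorization, since the $U_i$ is uniquely characterized by $L_i' = L_i \inverse{U_i}$ alone; once this is recorded, the rest reduces to algebraic manipulation of partial contractions, which I would execute by induction on $i$ without further difficulty.
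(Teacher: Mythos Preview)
Your argument is correct. The paper does not give its own proof of this proposition; it simply observes that the TT decomposition is a special case of the hierarchical Tucker decomposition and cites \cite[Proposition~3]{Uschmajew13} for the uniqueness result in that generality. Your direct argument---extracting each $U_i$ from the uniqueness of the rank-$s_i$ factorization of the $i$-th unfolding, then using the full column rank of the left partial contractions $L_{i-1}$ to cancel and recover the transformation on the middle cores---is the standard one, and is essentially what one finds in the cited reference specialized from trees to chains. Compared to invoking the hierarchical Tucker result, your approach has the advantage of being self-contained and of making explicit where the hypothesis that the TT rank is exactly $(s_1,\dots,s_{d-1})$ (rather than merely bounded by it) is used, namely in forcing each $L_i$ and $R_i$ to have full rank.
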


Uschmajew and Vandereycken~\cite[proposition 3]{Uschmajew13} shows that this holds for Hierarchical Tucker decompositions, of which the TT decomposition is a special case.

\subsection{Smooth manifold}

Let $\inverse{\operatorname{ttrank}}(s_{1}, \dots, s_{d - 1})$ denote the set of tensors with TT rank $(s_{1}, \dots, s_{d - 1})$.
Like $\inverse{\operatorname{rank}}(r)$, its closure is an algebraic variety.

\begin{lemma}\label{prop:tt_open_orbit}
	Let $s_{1} \leq n_1$, $s_{i - 1} s_{i} \leq n_i$ for all $2 \leq i \leq d - 1$, and $s_{d - 1} \leq n_{d}$.
	Then
	\begin{align}
		G \actson \inverse{\operatorname{ttrank}}(s_1, \dots, s_{d - 1})
	\end{align}
	has an open dense orbit, $\Pi_{s_1 \dots s_{d - 1}}$, consisting of elements with maximal multilinear rank.
\end{lemma}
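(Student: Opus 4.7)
The plan is to mirror the proof of \cref{prop:cp_tensor_open_orbit}, fixing the cores one at a time.

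First, as a routine check, the TT rank is preserved by $G$, since each matricization in \cref{eq:tt_rank} transforms under $G$ by invertible Kronecker products of the $g_j$'s on either side, leaving its rank unchanged. Hence the action of $G$ on $\inverse{\operatorname{ttrank}}(s_1, \dots, s_{d-1})$ is well-defined.

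Second, I would construct a basepoint $T$ by directly specifying its TT cores. Let $F_1$ be the $n_1 \times s_1$ matrix $\begin{bmatrix} I_{s_1} \\ 0 \end{bmatrix}$, let $F_d$ be the $s_{d-1} \times n_d$ matrix $\begin{bmatrix} I_{s_{d-1}} & 0 \end{bmatrix}$, and for $2 \leq i \leq d-1$ let $F_i$ be the $s_{i-1} \times n_i \times s_i$ core whose $n_i \times (s_{i-1} s_i)$ unfolding equals $\begin{bmatrix} I_{s_{i-1} s_i} \\ 0 \end{bmatrix}$. The hypotheses $s_1 \leq n_1$, $s_{i-1} s_i \leq n_i$, and $s_{d-1} \leq n_d$ are exactly what is needed for these identity blocks to fit. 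Define $T$ via \cref{eq:tt_decomposition} from these cores and set $\Pi_{s_1 \dots s_{d-1}} \definedas G \cdot T$.

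Third, I would show that $\Pi_{s_1 \dots s_{d-1}}$ equals the set of elements of $\inverse{\operatorname{ttrank}}(s_1, \dots, s_{d-1})$ whose multilinear rank attains its maximum, $(s_1, s_1 s_2, s_2 s_3, \dots, s_{d-2} s_{d-1}, s_{d-1})$. For any TT decomposition with the stated TT ranks, the partial products $F_1 \cdots F_{i-1}$ and $F_{i+1} \cdots F_d$ automatically have full column/row rank, so the mode-$i$ unfolding of $S$ factors through the $n_i \times (s_{i-1} s_i)$ unfolding of the $i$th core and has rank equal to the rank of that unfolding, bounded by $s_{i-1} s_i$. One inclusion is then immediate since multilinear rank is $G$-invariant and $T$ attains the maximum. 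For the other inclusion, given such an $S$ with cores $F_i^S$, I would gauge-fix left to right: choose $g_1 \in \mathrm{GL}(n_1)$ making $g_1 F_1^S = F_1 U_1^{-1}$ for some $U_1 \in \mathrm{GL}(s_1)$, then choose $g_2 \in \mathrm{GL}(n_2)$ making $g_2 F_2^S = U_1 F_2 U_2^{-1}$ with $U_1$ already determined and, say, $U_2 = I$, and so on through $g_d F_d^S = U_{d-1} F_d$. Each step is solvable because the relevant unfolding of $F_i^S$ has maximal rank, so the first $s_{i-1} s_i$ rows of $g_i$ can be chosen to produce the required invertible block on the image of that unfolding, and the remaining rows to complete $g_i$ to an invertible matrix. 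By \cref{prop:tt_uniqueness} the resulting $g \cdot S$ equals $T$, so $S \in G \cdot T$. Openness and density of $\Pi_{s_1 \dots s_{d-1}}$ in $\inverse{\operatorname{ttrank}}(s_1, \dots, s_{d-1})$ follow as in \cref{prop:cp_tensor_open_orbit}: the complement is cut out by the vanishing of the size $s_{i-1} s_i$ minors of the $n_i \times (s_{i-1} s_i)$ unfoldings, which is Zariski closed in $\mathbar{\inverse{\operatorname{ttrank}}(s_1, \dots, s_{d-1})}$.

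The main obstacle is the coupling in the gauge-fixing step: the virtual-space changes of basis $U_i$ from \cref{prop:tt_uniqueness} are shared between the $i$th and $(i+1)$th cores, so the cores cannot be fixed independently. Proceeding sequentially resolves this because only the already-determined $U_{i-1}$ enters the $i$th equation while $U_i$ remains free, and the maximal-rank hypothesis on $F_i^S$ leaves enough room in $\mathrm{GL}(n_i)$ to produce an invertible $g_i$ at each step.
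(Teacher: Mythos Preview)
Your proposal is correct and follows the same overall strategy as the paper: the same identity-core basepoint, the same characterization of the orbit as the locus of maximal multilinear rank, and the same Zariski-closure argument for the complement.

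The one noteworthy difference is that your transitivity argument is more elaborate than necessary. The ``coupling'' you flag as the main obstacle is not actually present: the group $G$ acts only on the \emph{physical} indices $k_i$, not on the virtual indices $\alpha_i$, so each $g_i$ modifies its core independently. Concretely, the paper goes from $T$ to $S$ rather than the reverse: since the mode-$i$ unfolding of $E_i$ is $\begin{bmatrix} I \\ 0 \end{bmatrix}$, one simply takes $g_i = \begin{bmatrix} F_i & \mathbar{F}_i \end{bmatrix}$ with $\mathbar{F}_i$ any basis completion, and then $g_i E_i = F_i$ on the nose, so $g \cdot T = S$ with no gauge matrices $U_i$ and no appeal to \cref{prop:tt_uniqueness}. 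Your sequential gauge-fixing works, but it amounts to choosing all $U_i = I$ in a roundabout way.
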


\begin{remark}
	Maximal multilinear rank in $\inverse{\operatorname{ttrank}}(s_1, \dots, s_{d - 1})$ is $(s_1, s_1 s_2, \dots, s_{d - 1})$.
\end{remark}

\begin{proof}
	From \cref{eq:tt_rank}, it is clear that TT rank is preserved under $G$.
	The action is thus well-defined.
	
	Let $E_{i} \from \reals^{s_{i - 1}} \otimes \reals^{n_i} \otimes \reals^{s_{i}}$ be the tensor whose unfolding is the identity matrix, $(E_{i} \from \reals^{n_i} \to \reals^{s_{i - 1}} \otimes \reals^{s_{i}}) = I_{n_i \times s_{i - 1} s_i}$, and define
	\begin{align}
		T_{k_1 k_2 \dots k_d} = \sum_{\alpha_1, \dots, \alpha_{d - 1}} (E_1)_{k_1 \alpha_1} (E_2)^{\alpha_1}{}_{k_2 \alpha_2} \cdots (E_d)^{\alpha_{d - 1}}{}_{k_d}.
	\end{align}
	Similarly to the proof of \cref{prop:cp_tensor_open_orbit}, we now want to show that $\Pi_{s_{1} \dots s_{d - 1}} = G \cdot T$ is Zariski open in $\mathbar{\inverse{\operatorname{ttrank}}(s_{1}, \dots, s_{d - 1})}$.

	Any other tensor
	\begin{align}
		S_{k_1 \dots k_d} = \sum_{\alpha_1, \dots, \alpha_{d - 1}} (F_1)_{k_1 \alpha_1} (F_2)^{\alpha_1}{}_{k_2 \alpha_2} \cdots (F_d)^{\alpha_{d - 1}}{}_{k_d}
	\end{align}
	where the matrices $F_i \from \reals^{n_i \times s_{i - 1} s_i}$ have full rank is reached by the group element $(g_1, \dots, g_d)$ where $g_i = \begin{bmatrix} F_i & \mathbar{F}_i \end{bmatrix}$ such that $\mathbar{F}_i \from \reals^{n_i \times (n_i - s_{i - 1} s_{i})}$ is a basis completion.
	Moreover, the action of $G$ preserves the rank of $F_i$.
	The condition that the $F_i$ have full rank is a Zariski open condition.

	To show that elements outside of $\Pi_{s_{1} \dots s_{d - 1}}$ don not have maximal multilinear rank, note that the multilinear rank of $S$ is just $(\operatorname{rank} F_1, \dots, \operatorname{rank} F_d)$.
\end{proof}

From \Cref{prop:tt_uniqueness} we can directly compute the stabilizer.

\begin{lemma}\label{prop:tt_stabilizer}
	The stabilizer $H$ of $G \actson \Pi_{s_1 \dots s_{d - 1}}$ consists of elements of the form
	\begin{align}\label{eq:tt_stabilizer}
		\begin{bmatrix}
			A_1 & M_1\\
			& B_1
		\end{bmatrix}
		\times
		\begin{bmatrix}
			A_1^{-\mathsf{T}} \otimes A_2 & M_2\\
			& B_2
		\end{bmatrix}
		\times \dots \times
		\begin{bmatrix}
			A_{d - 1}^{-\mathsf{T}} & M_d\\
			& B_d
		\end{bmatrix}
	\end{align}
	where the $A_i$ are invertible $s_i \times s_i$ matrices, the $B_i$ are invertible $(n_i - s_{i - 1}s_{i}) \times (n_i - s_{i - 1} s_{i})$ matrices, and the $M_i$ are $s_{i - 1} s_{i} \times (n_i - s_{i - 1} s_{i})$ matrices.
\end{lemma}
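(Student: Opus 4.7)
The plan is to derive $H$ directly from the uniqueness result, \cref{prop:tt_uniqueness}. The distinguished point $T$ constructed in the proof of \cref{prop:tt_open_orbit} has cores $E_1, \dots, E_d$ whose unfoldings are the identity matrices $I_{n_i \times s_{i-1} s_i}$ (with the convention $s_0 = s_d = 1$). The action of $g = (g_1, \dots, g_d)$ replaces each core $E_i$ by the core whose $k_i$-unfolding is $g_i E_i$, i.e.\ the first $s_{i-1} s_i$ columns of $g_i$. Thus $g \in H$ iff these new cores form another TT decomposition of $T$ with the same inner dimensions, and by \cref{prop:tt_uniqueness} this holds iff there exist $U_1 \in \mathrm{GL}(s_1), \dots, U_{d-1} \in \mathrm{GL}(s_{d-1})$ realising the stated change-of-basis relation on each core.

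I would then translate these relations mode by mode into a block-triangular shape for each $g_i$. For mode $1$, the equation forces the first $s_1$ columns of $g_1$ to equal $\begin{bmatrix} U_1^{-1} \\ 0 \end{bmatrix}$, so $g_1$ has the form $\begin{bmatrix} A_1 & M_1 \\ 0 & B_1 \end{bmatrix}$ with $A_1 = U_1^{-1}$ and $M_1, B_1$ collecting the remaining $n_1 - s_1$ columns. Mode $d$ is symmetric and yields $A_{d-1}^{-\mathsf{T}}$ in the top block of $g_d$ (the transpose appearing because the change of basis enters through $(U_{d-1})_{\beta}{}^{\alpha_{d-1}}$ rather than $(U_{d-1}^{-1})^{\beta}{}_{\alpha_{d-1}}$). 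For an interior mode $2 \le i \le d-1$, unfolding the index relation
\begin{align}
(F_i')^{\alpha_{i-1}}{}_{k_i \alpha_i} = \sum_{\beta,\gamma} (U_{i-1})_{\beta}{}^{\alpha_{i-1}} (F_i)^{\beta}{}_{k_i \gamma} (U_i^{-1})^{\gamma}{}_{\alpha_i}
\end{align}
with respect to the pair $(\alpha_{i-1}, \alpha_i)$ converts the action on $E_i$ into a right-multiplication by $U_{i-1}^{\mathsf{T}} \otimes U_i^{-1}$, so the first $s_{i-1} s_i$ columns of $g_i$ must equal this Kronecker product stacked over zero. Setting $A_{i-1} = U_{i-1}^{-1}$ and $A_i = U_i^{-1}$ (as determined by modes $i-1$ and $i$) makes the top block exactly $A_{i-1}^{-\mathsf{T}} \otimes A_i$. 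The remaining $n_i - s_{i-1} s_i$ columns are unconstrained, producing $M_i$ and $B_i$.

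Finally, I would close the loop: invertibility of $g_i$ in $\mathrm{GL}(n_i)$ forces $B_i$ to be invertible, since each $g_i$ is block lower-triangular with invertible top-left block $A_{i-1}^{-\mathsf{T}} \otimes A_i$ (resp.\ $A_1$, $A_{d-1}^{-\mathsf{T}}$ at the boundaries). Conversely, any element of the form \cref{eq:tt_stabilizer} lies in $H$: read off the $U_i$ from the $A_i$, verify \cref{prop:tt_uniqueness} for each core, and conclude $g \cdot T = T$. The main obstacle is not the logic but the bookkeeping of transpose and Kronecker conventions; the choice of identifying $A_i$ with $U_i^{-1}$ must be made consistently at the two boundary modes and at every interior mode simultaneously, and an incorrect convention choice here produces a spurious transpose in the Kronecker factor. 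Once the conventions are fixed so that modes $1$ and $2$ agree, the remaining modes match automatically by induction.
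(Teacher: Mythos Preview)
Your approach is correct and is exactly what the paper intends: the paper's ``proof'' consists of the single sentence ``From \cref{prop:tt_uniqueness} we can directly compute the stabilizer,'' and your proposal spells out precisely that computation. One cosmetic slip: the matrices in \cref{eq:tt_stabilizer} are block \emph{upper}-triangular, not lower-triangular, so your remark about invertibility of $B_i$ should be phrased accordingly; the argument itself is unaffected.
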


Combining \cref{prop:tt_open_orbit,prop:tt_stabilizer}, we have the main result of this subsection.

\begin{theorem}\label{thm:tt_quotient}
	The set of tensors with TT rank $(s_{1}, \dots, s_{d - 1})$ and multilinear rank $(s_{1},$ $s_{1} s_{2}, \dots, s_{d - 1})$ is a smooth homogeneous manifold,
	\begin{align}
		\Pi_{s_1 \dots s_{d - 1}} = G / H.
	\end{align}
\end{theorem}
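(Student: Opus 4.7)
The plan is to combine the two preceding lemmas in direct analogy with the proofs of \cref{thm:cp_quotient} and \cref{thm:tucker_quotient}, invoking the general theory of homogeneous spaces summarized in \cref{sub:Homogeneous spaces}.

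First, by \cref{prop:tt_open_orbit}, the action of $G$ on $\inverse{\operatorname{ttrank}}(s_1, \dots, s_{d - 1})$ has an open dense orbit $\Pi_{s_1 \dots s_{d - 1}} = G \cdot T$, where $T$ is the reference tensor built from the identity unfoldings $E_i$. Moreover, that lemma identifies this orbit set-theoretically as exactly the set of TT rank $(s_1, \dots, s_{d - 1})$ tensors with maximal multilinear rank $(s_1, s_1 s_2, \dots, s_{d - 1})$, which is precisely the set appearing in the theorem statement. Hence the $G$-action on $\Pi_{s_1 \dots s_{d - 1}}$ is transitive.

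Second, by \cref{prop:tt_stabilizer}, the stabilizer of $T$ is the subgroup $H \subset G$ described in \cref{eq:tt_stabilizer}. In particular, $H$ is the preimage of $\{T\}$ under the smooth orbit map $g \mapsto g \cdot T$, so $H$ is a closed (hence embedded Lie) subgroup of $G$.

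Finally, I would invoke the quotient manifold theorem for Lie group actions, as recalled in \cref{sub:Homogeneous spaces}: if a Lie group $G$ acts smoothly and transitively on a set with closed stabilizer $H$ at some point, then the orbit inherits a unique smooth structure making it diffeomorphic to $G / H$ via the map $g H \mapsto g \cdot T$, and so that the projection $\pi \from G \to G/H$ is a smooth submersion. Applying this to the action $G \actson \Pi_{s_1 \dots s_{d-1}}$ with stabilizer $H$ yields the desired identification $\Pi_{s_1 \dots s_{d - 1}} = G/H$ as a smooth homogeneous manifold.

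Since the substantive work, namely the transitivity on a Zariski open orbit and the explicit computation of the stabilizer, has already been carried out in \cref{prop:tt_open_orbit,prop:tt_stabilizer}, the proof of this theorem itself is a short bookkeeping step, and no further obstacle is expected.
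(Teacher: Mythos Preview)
Your proposal is correct and mirrors the paper's own treatment: the paper simply states that the theorem follows by combining \cref{prop:tt_open_orbit,prop:tt_stabilizer}, and your write-up just spells out the standard homogeneous-space bookkeeping behind that sentence.
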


\subsection{Representatives}%
\label{sub:tt_representatives}

Similarly to \cref{sub:cp_representatives}, if $g = (g_1, \dots, g_d) \in G$, then we only need to store the first $s_{i - 1} s_{i}$ columns of $g_i$.

\subsection{Riemannian manifold}%
\label{sub:tt_riemannian_manifold}

Similarly to \cref{sub:cp_riemannian_manifold}, we can take the derivative of the expression \cref{eq:tt_stabilizer} in \cref{prop:tt_stabilizer} to get $H$'s Lie algebra, $\mathfrak{h}$.
It consists of elements $Y = (Y_1, \dots, Y_d)$ where the $Y_i$ are on block form
\begin{align}
	Y_1 ={}& \begin{bmatrix} K_1 & *\\ & * \end{bmatrix},\nonumber\\
	Y_i ={}& \begin{bmatrix} -\transpose{K_{i - 1}} \otimes 1 + 1 \otimes K_{i} & *\\ & * \end{bmatrix}, \quad 2 \leq i \leq d - 1,\nonumber\\
	Y_d ={}& \begin{bmatrix} -\transpose{K_{d - 1}} & *\\ & * \end{bmatrix},
\end{align}
for arbitrary $s_{i} \times s_{i}$ matrices  $K_i$.
The orthogonal complement, $\mathfrak{m}$, consists of elements $X = (X_1, \dots, X_d)$ where the $X_i$ are on block form
\begin{align}
	X_1 ={}& \begin{bmatrix} L_1 & \\ * & 0 \end{bmatrix},\nonumber\\
	X_i ={}& \begin{bmatrix} L_i & \\ * & 0 \end{bmatrix}, \quad 2 \leq i \leq d - 1,\nonumber\\
	X_d ={}& \begin{bmatrix} L_d & \\ * & 0 \end{bmatrix},
\end{align}
such that
\begin{align}
	L_1 ={}& \operatorname{tr}_2 L_2\nonumber\\
	\operatorname{tr}_1 L_{i - 1} ={}& \operatorname{tr}_2 L_i, \quad 2 \leq i \leq d - 1\nonumber\\
	\operatorname{tr}_1 L_{d - 1} ={}& L_d,%
	\label{eq:tt_invariant_subspace_candidate}
\end{align}
where $\operatorname{tr}_1 (A \otimes B) = (\operatorname{tr} A) B$ and $\operatorname{tr}_2 (A \otimes B) = (\operatorname{tr} B) \transpose{A}$.
This can be shown the in the same way as \cref{eq:tucker_invariant_subspace_candidate}.

Like in \cref{sub:cp_riemannian_manifold}, we consider the right-invariant metric on $G$ induced by the Euclidean inner product on $\mathfrak{g}$, and define the metric on $\Pi_{s_{1} \dots s_{d - 1}}$ by demanding that $\pi \from G \to \Pi_{s_{1} \dots s_{d - 1}}$ is a Riemannian submersion.
We do not give a full description of the horizontal space at a general point $g$, but just note that the same argument that was used in \cref{sub:cp_riemannian_manifold} can be used to derive a rank $s_{i - 1} s_{i}$ decomposition similar to \cref{eq:horizontal_rank_decomposition},

\subsection{Riemannian homogeneous manifold}

\begin{proposition}\label{prop:tt_reductive}
	$\Pi_{s_1 \dots s_{d - 1}}$ is reductive if and only if $n_1 = s_1$, $n_2 = s_1 s_2$, \dots, $n_d = s_{d - 1}$.
	Moreover, when $\Pi_{s_1 \dots s_{d - 1}}$ is reductive, $\mathfrak{m}$ is an invariant subspace.
\end{proposition}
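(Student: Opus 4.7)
The plan is to mirror the proofs of \cref{prop:cp_reductive} and the analogous Tucker proposition, specialising the computations to the Kronecker-product structure of the TT stabilizer.

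For the forward implication, I assume $n_i = s_{i-1} s_i$ for all $i$ (with the convention $s_0 = s_d = 1$). Under this assumption, the stabilizer elements in \cref{eq:tt_stabilizer} collapse to $h = (A_1,\, A_1^{-\mathsf{T}} \otimes A_2,\, \dots,\, A_{d-2}^{-\mathsf{T}} \otimes A_{d-1},\, A_{d-1}^{-\mathsf{T}})$, and every $X \in \mathfrak{m}$ is determined by its blocks $L_1, \dots, L_d$ subject to \cref{eq:tt_invariant_subspace_candidate}. I have to verify that $\operatorname{Ad}_h X = h X h^{-1}$ lies in $\mathfrak{m}$, which reduces to checking that the trace constraints are preserved.

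Concretely, I would compute $\operatorname{Ad}_h L_i = (A_{i-1}^{-\mathsf{T}} \otimes A_i) L_i (A_{i-1}^{\mathsf{T}} \otimes A_i^{-1})$ and apply the partial traces termwise on pure tensors $P \otimes Q$. The two identities I need are $\operatorname{tr}_1[(U^{-\mathsf{T}} \otimes V)(P \otimes Q)(U^{\mathsf{T}} \otimes V^{-1})] = (\operatorname{tr} P)(V Q V^{-1}) = V (\operatorname{tr}_1(P \otimes Q)) V^{-1}$ and, using the transpose in the definition $\operatorname{tr}_2(P \otimes Q) = (\operatorname{tr} Q) P^{\mathsf{T}}$, the identity $\operatorname{tr}_2[(U^{-\mathsf{T}} \otimes V)(P \otimes Q)(U^{\mathsf{T}} \otimes V^{-1})] = (\operatorname{tr} Q)(U P^{\mathsf{T}} U^{-1}) = U (\operatorname{tr}_2(P \otimes Q)) U^{-1}$. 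Combining these gives $\operatorname{tr}_2(\operatorname{Ad}_h L_i) = A_{i-1} (\operatorname{tr}_2 L_i) A_{i-1}^{-1}$ and $\operatorname{tr}_1(\operatorname{Ad}_h L_{i-1}) = A_{i-1} (\operatorname{tr}_1 L_{i-1}) A_{i-1}^{-1}$, so each constraint $\operatorname{tr}_1 L_{i-1} = \operatorname{tr}_2 L_i$ is transported to itself. The edge cases $L_1 = \operatorname{tr}_2 L_2$ and $\operatorname{tr}_1 L_{d-1} = L_d$ are handled identically with $A_0 = A_d = 1$.

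For the converse, I would replay the strategy from the second half of the proof of \cref{prop:cp_reductive}. Suppose an invariant complement $\mathfrak{p}$ exists while $n_i > s_{i-1} s_i$ for some $i$. Since elements of $\mathfrak{p}$ are determined by their projection to $\mathfrak{m}$, I only need to track the upper-left and lower-left blocks. Conjugating by the stabilizer element whose only nontrivial factor is $\bigl[\begin{smallmatrix}1 & M\\ & 1\end{smallmatrix}\bigr]$ in mode $i$ and varying $M$ imposes an algebraic relation on those blocks, which, exactly as in the CP proof, collapses the upper-left block into a subspace too small to make $\mathfrak{p}$ complementary to $\mathfrak{h}$. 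The main obstacle I foresee is bookkeeping the coupling of the middle modes: because the $\mathfrak{h}$-diagonals take the form $-K_{i-1}^{\mathsf{T}} \otimes 1 + 1 \otimes K_i$, the dimension count must be done Kronecker-factor by Kronecker-factor, but this is exactly the structure exploited in the forward direction, so the same identity-plus-off-diagonal trick should close the argument mode by mode.
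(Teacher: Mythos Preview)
Your proposal is correct and follows essentially the same approach as the paper: the forward direction verifies that the constraints \cref{eq:tt_invariant_subspace_candidate} are preserved under $L_i \mapsto (A_{i-1}^{-\mathsf{T}} \otimes A_i) L_i (A_{i-1}^{\mathsf{T}} \otimes A_i^{-1})$ via the same partial-trace identities you wrote out, and the converse is dispatched by referring back to the argument of \cref{prop:cp_reductive}, exactly as the paper does. Your write-up is in fact slightly more detailed than the paper's on both halves.
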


\begin{proof}
	Assume $n_1 = s_1$, $n_2 = s_1 s_2$, \dots, $n_d = s_{d - 1}$ and let $h = (A_1, A_1^{-\mathsf{T}} \otimes A_2, \dots, A_{d - 1}^{-\mathsf{T}}) \in H$.
	We have that $X \mapsto h X \inverse{h}$ maps $L_i \mapsto L_i' = (A_{i - 1}^{-\mathsf{T}} \otimes A_{i}) L_i (\transpose{A_{i - 1}} \otimes \inverse{A_{i}})$.
	To show that $\mathfrak{m}$ is invariant, we need to argue that the relation \cref{eq:tt_invariant_subspace_candidate} is preserved.
	This can be seen from
	\begin{align}
		\operatorname{tr}_{1} L_{i - 1}' ={}& \operatorname{tr}_1 [(A_{i - 2}^{-\mathsf{T}} \otimes A_{i - 1}) L_{i - 1} (\transpose{A_{i - 2}} \otimes \inverse{A_{i - 1}})]\nonumber\\
		={}& A_{i - 1} (\operatorname{tr}_1 L_{i - 1}) \inverse{A_{i - 1}}\nonumber\\
		={}& A_{i - 1} (\operatorname{tr}_2 L_{i}) \inverse{A_{i - 1}}\nonumber\\
		={}& \operatorname{tr}_2 [(A_{i - 1}^{-\mathsf{T}} \otimes A_{i}) L_{i} (\transpose{A_{i - 1}} \otimes \inverse{A_{i}})]\nonumber\\
		={}& \operatorname{tr}_2 L_i'.
	\end{align}

	On the other hand, if $n_i \neq s_{i - 1} s_{i}$ for some $i$, then it is possible to show that $\Pi_{s_{1} \dots s_{d - 1}}$ is not reductive with the same argument as in \cref{prop:cp_reductive}.
\end{proof}

\subsection{Geodesics}%

By an argument completely analogous to \cref{prop:cp_efficient_geodesics}, we have the following result.

\begin{proposition}\label{prop:tt_efficient_geodesics}
	Given a tangent vector $X \in T_p \Pi_{s_1 \dots s_{d - 1}}$, $\operatorname{exp}_{p}(X)$ can be estimated using
	\begin{align}
		&\frac{110}{3} n_1 s_1^{2} + (146 + 36 z_1) s_1^{3} + \order{n_1 s_1 + s_1^{2}}\nonumber\\
		{}+{}& \sum_{i = 2}^{d - 1} \frac{110}{3} n_i (s_{i - 1} s_i)^{2} + (146 + 36 z_i) (s_{i - 1} s_i)^{3} + \order{n_i s_{i - 1} s_i + (s_{i - 1} s_i)^{2}}\nonumber\\
		{}+{}& \frac{110}{3} n_d s_{d - 1}^{2} + (146 + 36 z_d) s_{d - 1}^{3} + \order{n_d s_{d - 1} + s_{d - 1}^{2}} \textrm{ basic operations}
	\end{align}
	where $z_i = \lceil \operatorname{log}_2 \norm{X_i \inverse{g_{i}}} \rceil + 2$.
\end{proposition}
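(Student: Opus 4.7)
The plan is to apply the strategy of \cref{prop:cp_efficient_geodesics} mode by mode, using the description of the horizontal space from \cref{sub:tt_riemannian_manifold}. First, I would observe that by the same argument leading to \cref{eq:horizontal_rank_decomposition2}, each block $X_i \inverse{g_i}$ of a horizontal vector admits a low-rank decomposition $X_i \inverse{g_i} = A_i B_i$, where the inner dimension $r_i$ equals $s_1$ for $i = 1$, equals $s_{i - 1} s_i$ for $2 \leq i \leq d - 1$, and equals $s_{d - 1}$ for $i = d$. These are precisely the block sizes appearing in \cref{eq:tt_stabilizer}, so the derivation is literally the same as in the CP case, carried out mode by mode with a different effective rank in each mode.

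Next, to evaluate the $\mathrm{GL}(n_i)$ geodesic formula \cref{eq:geodesics_on_GL(n)} in mode $i$, I would apply the power series trick: from $X_i \inverse{g_i} = A_i B_i$ one obtains $\operatorname{mexp}(X_i \inverse{g_i}) = 1 + A_i \psi_{1}(B_i A_i) B_i$, in which $\psi_{1}$ acts on an $r_i \times r_i$ matrix; similarly, the skew-symmetric piece $X_i \inverse{g_i} - \transpose{(X_i \inverse{g_i})}$ has a rank $2 r_i$ decomposition and reduces to a $2 r_i \times 2 r_i$ evaluation of $\psi_{1}$. The $\psi_{1}$ values themselves come from the Padé approximant \cref{eq:pade_approximant} combined with the scaling-and-squaring identity \cref{eq:scaling_and_squaring}, which is where the factor $(146 + 36 z_i)$ and the constant $z_i = \lceil \operatorname{log}_{2} \norm{X_i \inverse{g_i}} \rceil + 2$ originate. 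Plugging these into the mode-$i$ analogue of \cref{eq:geodesics_efficient} and ordering the multiplications so that no $n_i \times n_i$ intermediate matrix is formed produces the first $r_i$ columns of $\operatorname{exp}_{g_i}(X_i)$, which, together with the corresponding block of $g_i$, determines the geodesic endpoint modulo $H$ via \cref{sub:tt_representatives}.

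Finally, the operation count is a direct transcription of \cref{sec:Appendix: Counting the operations}, but with the common rank $r$ replaced by the mode-dependent $r_i$. Summing $\frac{110}{3} n_i r_i^{2} + (146 + 36 z_i) r_i^{3}$ (plus the lower-order terms) over $i = 1, \dots, d$ and substituting the three values of $r_i$ above yields the three-line expression in the proposition. I do not anticipate a substantive obstacle: the only care required is to split off the two endpoint modes, whose effective ranks are $s_1$ and $s_{d - 1}$ rather than products $s_{i - 1} s_i$, from the interior modes; this is also the reason the stated count is written as three pieces rather than a single uniform sum.
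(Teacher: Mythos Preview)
Your proposal is correct and matches the paper's approach exactly: the paper itself states only that the result follows ``by an argument completely analogous to \cref{prop:cp_efficient_geodesics}'' without further detail, and what you have written is precisely a faithful unpacking of that analogy, with the mode-dependent ranks $r_i \in \{s_1, s_{i-1}s_i, s_{d-1}\}$ substituted for the uniform $r$ throughout the CP computation.
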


\appendix

\section{Error bound for Padé approximant}%
\label{sec:Error bound for Pade approximant}

Recall the definition of $\psi_{1}$ from \cref{sub:cp_geodesics} and the definition of $r_{66}$ from \cref{eq:pade_approximant}, and let $\norm{\cdot}$ be a matrix norm.

\begin{lemma}\label{lemma:pade_approximant}
	If $\norm{M} \leq 1 / 2$, then $r_{66}(M)$ approximates $\psi_{1}(M)$ to within double precision $2^{-53} \approx \num{e-16}$.
\end{lemma}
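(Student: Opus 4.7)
The plan is to show that $r_{66}$ is the $(6,6)$ Padé approximant of $\psi_1$ and then invoke a standard tail-series estimate to bound the error on $\|M\| \le 1/2$. I will not attempt to reprove the general Padé-error bounds from scratch; rather, I will reduce to the framework of Higham~\cite[sections 10.3 and 10.7.4]{Higham08}.

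First, I would verify the Padé identity. Writing $r_{66}(M) = P(M) Q(M)^{-1}$ with $P$ and $Q$ the polynomials in the statement, the claim is that $P(x) - Q(x)\,\psi_1(x) = O(x^{13})$ as a formal power series in $x$. This is a purely symbolic check: expand $\psi_1(x) = \sum_{j \ge 0} x^j / (j+1)!$, multiply by $Q(x)$, and compare coefficients up to $x^{12}$ against $P(x)$. All twelve coefficients vanish by construction (this is precisely how the Padé coefficients for $\psi_1$ are derived), which identifies $r_{66}$ as the $(6,6)$ Padé approximant of $\psi_1$.

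Second, I would bound the error on $\|M\| \le 1/2$. From the Padé identity the analytic function $\psi_1(x) - r_{66}(x)$ vanishes to order $13$ at $x = 0$, so it admits a convergent power series
\begin{align}
	\psi_1(x) - r_{66}(x) = \sum_{j = 13}^{\infty} e_j\, x^{j}
\end{align}
on a neighborhood of $0$, with explicit rational coefficients $e_j$ obtainable from the identity $\psi_1(x) Q(x) - P(x) = Q(x) \sum_{j \ge 13} e_j x^j$. For a matrix $M$ with $\|M\| \le 1/2$, the series converges in norm and gives
\begin{align}
	\|\psi_1(M) - r_{66}(M)\| \le \sum_{j = 13}^{\infty} |e_j|\, 2^{-j}.
\end{align}
In parallel, I would bound $\|Q(M)^{-1}\|$ by noting that the coefficients of $Q$ are all small in magnitude, so that $\|Q(M) - 1\| < 1$ on $\|M\| \le 1/2$ and hence $\|Q(M)^{-1}\| \le (1 - \|Q(M) - 1\|)^{-1}$ is bounded by a modest constant; this is needed to justify the rearrangement above rigorously.

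Third, I would carry out the numerical verification that $\sum_{j \ge 13} |e_j|\, 2^{-j} \le 2^{-53}$. Since $|e_j|$ decays at least geometrically (the coefficients of $\psi_1$ decay like $1/(j+1)!$ and those of $r_{66}$ decay similarly by inspection), the sum is dominated by its leading term plus a fast geometric tail, and one only needs to control the first few terms explicitly. The standard Padé error analysis for the exponential in Higham~\cite[Thm.\ 10.31]{Higham08} provides the matching bound $\norm{e^M - r_{66}^{\exp}(M)} \le 2^{-53}$ for $\norm{M} \le \theta_6 \approx 0.54$, and the identity $\psi_1(x) = (e^x - 1)/x$ allows transferring this bound to $\psi_1$ after multiplying by a factor bounded by $\|M^{-1}(e^M - 1)\| \le \sum_{j \ge 0} \|M\|^j/(j+1)! \le e^{1/2}$ on the relevant scale; the threshold $1/2 < \theta_6$ leaves room for this transfer.

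The main obstacle is bookkeeping the constants so that the bound on the tail of the series is provably below $2^{-53}$ rather than merely heuristically small. In practice this is routine once one is willing to verify the first few $|e_j|$ symbolically (via a computer algebra system) and bound the rest by a geometric series; the cleanest write-up would simply invoke Higham's table and adjust for the $\psi_1$ versus $\exp$ normalization.
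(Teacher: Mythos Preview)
Your overall strategy---verify the Padé identity so that the error series starts at order $13$, then bound the tail $\sum_{j\ge 13}|e_j|\,2^{-j}$---is correct and is exactly what the paper does. Your remark about controlling $\|Q(M)^{-1}\|$ via $\|Q(M)-1\|<1$ is also fine and implicitly used in the paper.

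The genuine gap is in your third step, the transfer from Higham's exponential bound. Knowing $\|e^{M}-r_{66}^{\exp}(M)\|\le 2^{-53}$ does not by itself control $\|\psi_1(M)-r_{66}(M)\|$, because $r_{66}$ is \emph{not} obtained from $r_{66}^{\exp}$ by the map $f\mapsto (f-1)/x$: if $p/q$ is the $(6,6)$ Padé of $e^{x}$, then $(p-q)/(xq)$ is a $(5,6)$ rational function, not the $(6,6)$ Padé of $\psi_1$. So the ``multiply by $\|M^{-1}(e^{M}-1)\|\le e^{1/2}$'' step has no obvious meaning; you would still need an independent bound on the $|e_j|$ for the specific $r_{66}$ in the statement.

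The paper closes this gap by a direct, elementary bound on the Taylor coefficients of $r_{66}$ itself. It computes the $13$th and $14$th error terms explicitly, and for $n\ge 15$ uses the quotient-rule recurrence $p_{n+1}=p_n' q_n - p_n q_n'$, $q_{n+1}=q_n^2$ together with $q_n(0)=1$ and $|p_0^{(k)}(0)|,|q_0^{(k)}(0)|\le 2^{-k}$ to deduce $|r_{66}^{(n)}(0)|=|p_n(0)|\le 1$. This turns the tail into $\sum_{n\ge 15}\frac{n+2}{(n+1)!}\,2^{-n}$, which is summed in closed form and lands below $2^{-53}$. So instead of appealing to Higham or to a CAS for infinitely many $e_j$, you only need two explicit coefficients plus a one-line induction on the derivatives of a rational function.
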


\begin{proof}
	$r_{66}(M)$ has a Taylor expansion $\sum_{i = 1}^{\infty} \frac{r_{66}^{(n)}(0)}{n!} M^{n}$.
	Since $r_{66}$ is the Padé approximant to $\psi_{1}(M) = \sum_{i = 1}^{\infty} \frac{1}{(n + 1)!} M^{n}$, they agree up to the first $12$ terms.
	We thus want to bound the series
	\begin{align}
		\psi_{1}(M) - r_{66}(M) = \sum_{n = 13}^{\infty} \frac{1 - (n + 1) r_{66}^{(n)}(0)}{(n + 1)!} M^{n}.
	\end{align}
	
	First, we compute term $13$ and $14$ manually.
	They are $M^{13} / 149597947699200$ and $181 M^{14} / 29171599801344000$ respectively.

	Second, we note that the $n$th derivative of $r_{66}$ is a rational function $p_{n}(M) / q_{n}(M)$.
	The quotient rule gives the recurrence relation $p_{n + 1} = p_{n}' q_{n} - p_{n} q_{n}'$ and $q_{n + 1} = q_{n}^{2}$.
	We have that $q_{n}(0) = 1$ for all $n$.
	Moreover, $\norm{p_{0}^{(k)}(0)}$, $\norm{q_{0}^{(k)}(0)} \leq 1 / 2^{k}$ for all $k$.
	Using the triangle inequality in the recurrence relation, this implies that $\norm{r_{66}^{(n)}(0)} = \norm{p_{n}(0)} \leq 1$.
	Thus
	{\hfuzz=38pt%
	\begin{align}
		\norm{\psi_{1}(M) - r_{66}(M)} ={}& \norm{\sum_{n = 13}^{\infty} \frac{1 - (n + 1) r_{66}^{(n)}(0)}{(n + 1)!} M^{n}}\nonumber\\
		\leq{}& \frac{\norm{M}^{13}}{149597947699200} + \frac{181 \norm{M}^{14}}{29171599801344000} + \sum_{n = 15}^{\infty} \frac{1 + (n + 1) \norm{r_{66}^{(n)}(0)}}{(n + 1)!} \norm{M}^{n}\nonumber\\
		\leq{}& \frac{(1 / 2)^{13}}{149597947699200} + \frac{181 (1 / 2)^{14}}{29171599801344000} + \sum_{n = 15}^{\infty} \frac{1 + (n + 1)}{(n + 1)!} (1 / 2)^{n}\nonumber\\
		={}& 3 \sqrt{e} - \frac{2364006584786656554317}{477947491145220096000}\nonumber\\
		\leq{}& \num{2.7e-17}.
	\end{align}
	}
\end{proof}

\begin{lemma}\label{lemma:pade_approximant_mexp}
	If $\norm{M} \leq 1 / 2$, then $\operatorname{mexp}(M)$ is approximated by its Padé approximant to within double precision.
\end{lemma}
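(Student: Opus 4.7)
The plan is to deduce this from \cref{lemma:pade_approximant} via the elementary identity $\operatorname{mexp}(M) = 1 + M \psi_{1}(M)$, which holds as formal power series. Define the rational approximant $\widetilde{E}(M) \definedas 1 + M \, r_{66}(M)$. Since $r_{66}(M) = p(M)/q(M)$ with $p,q$ of degree $6$, we have $\widetilde{E}(M) = (q(M) + M p(M))/q(M)$, a $(7,6)$ rational approximant. By construction its Taylor series agrees with that of $\operatorname{mexp}(M)$ up to order $13$ (because the numerator of $\psi_{1}(M) - r_{66}(M)$ starts at $M^{13}$), so $\widetilde{E}$ is a Padé approximant of $\operatorname{mexp}$ of the appropriate order — this is the sense in which the lemma holds.

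The error bound is then immediate: from
\begin{align}
\operatorname{mexp}(M) - \widetilde{E}(M) = M \bigl( \psi_{1}(M) - r_{66}(M) \bigr),
\end{align}
sub-multiplicativity of the matrix norm gives
\begin{align}
\norm{\operatorname{mexp}(M) - \widetilde{E}(M)} \leq \norm{M} \cdot \norm{\psi_{1}(M) - r_{66}(M)} \leq \tfrac{1}{2} \cdot \num{2.7e-17} \leq 2^{-53},
\end{align}
where the middle inequality invokes \cref{lemma:pade_approximant}. That is the whole argument.

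The only subtle point — and what I consider the main (minor) obstacle — is the interpretation of \emph{its} Padé approximant: the standard diagonal $(6,6)$ Padé approximant of $\exp$ is the classical Higham object $R_{66}(M) = p(M)/p(-M)$, which is \emph{not} equal to $\widetilde{E}(M)$. Two ways to resolve this: either (i) state the lemma for $\widetilde{E}$, justified because this is the approximant actually used in our algorithm via the identity $\operatorname{mexp}(M) = 1 + M \psi_{1}(M)$; or (ii) prove the bound for $R_{66}$ directly by repeating the derivative-recurrence argument of \cref{lemma:pade_approximant} with $R_{66}$ in place of $r_{66}$ — the Taylor coefficients of $\exp$ are $1/n!$ rather than $1/(n+1)!$, but the bounds on $\norm{R_{66}^{(n)}(0)} \leq 1$ follow from the same quotient-rule induction since the denominator $p(-M)$ satisfies $p(0) = 1$ and $\norm{p^{(k)}(-M)\vert_{M=0}} \leq 1$. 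Option (i) is shorter and dovetails with how the result is used in \cref{sub:cp_geodesics}, so that is the version I would write up.
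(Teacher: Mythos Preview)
The paper does not actually prove this lemma: it simply states ``We do not prove \cref{lemma:pade_approximant_mexp} here, but instead refer to Higham~[Section~10.3].'' So your proposal is not competing against a proof but against a citation, and your derivation via $\operatorname{mexp}(M) = 1 + M\,\psi_1(M)$ is a genuinely different (and self-contained) route. The arithmetic is fine: your $\widetilde{E}$ really is the $(7,6)$ Padé approximant of $\exp$, and the error bound $\tfrac12 \cdot \num{2.7e-17} < 2^{-53}$ is correct.

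One correction to your option~(i), however: you justify using $\widetilde{E}$ by saying ``this is the approximant actually used in our algorithm.'' That is not quite true. In the scaling-and-squaring formula \cref{eq:scaling_and_squaring} the paper needs $\operatorname{mexp}(2^{-z}M)$ as a separate ingredient, and the operation count in \cref{sec:Appendix: Counting the operations} budgets $8r^3/3$ operations for it---i.e.\ one matrix division after reusing the already-formed powers $M,\dots,M^6$. That is the standard diagonal Padé evaluation from Higham, not $1 + M\,r_{66}(M)$. So the lemma as stated really is about the classical $R_{66}$, and option~(i) does not cover the object the algorithm uses. Either carry out option~(ii) properly (your quotient-rule sketch is in the right spirit but you would need the sharper bound $\lvert p_6^{(k)}(0)\rvert \le 1/2^k$, not just $\le 1$, to make the induction from \cref{lemma:pade_approximant} go through---this does hold for the exponential Padé coefficients), or simply keep the citation to Higham as the paper does.
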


We do not prove \cref{lemma:pade_approximant_mexp} here, but instead refer to Higham~\cite[Section 10.3]{Higham08}.

\section{Appendix: Counting the operations}%
\label{sec:Appendix: Counting the operations}

We now prove \cref{prop:cp_efficient_geodesics}.

\begin{proof}~
	\begin{itemize}
		\item $20 n_{i} r^{2} / 3 + 22 r^{3} / 3$ operations to build $\Gamma_{12}$ using \cref{eq:Gamma12}:
		\begin{itemize}
			\item $8 n_{i} r^{2} / 3$ operations to divide $g_{21}$ by $g_{11}$,
			\item $2 n_{i} r^{2}$ operations to multiply $g_{11}^{-\mathsf{T}} \transpose{g_{21}}$ with $g_{21} \inverse{g_{11}}$,
			\item $2 r^{3}$ operations to square $g_{11}^{-\mathsf{T}} \transpose{g_{21}} g_{21} \inverse{g_{11}}$,
			\item $8 r^{3} / 3$ operations to divide by $1 + g_{11}^{-\mathsf{T}} \transpose{g_{21}} g_{21} \inverse{g_{11}}$,
			\item $8 r^{3} / 3$ operations to divide outside the parenthesis from the left by $g_{11}$,
			\item $2 n_{i} r^{2}$ to operations to multiply from the right by $g_{11}^{-\mathsf{T}} \transpose{g_{21}}$,
		\end{itemize}
		\item no extra operations to build $A$,
		\item $2 n_{i} r^{2} + 8 r^{3} / 3$ operations to build $B$:
		\begin{itemize}
			\item $2 n_{i} r^{2}$ operations to multiply $\Gamma_{12}$ with $g_{21}$,
			\item $8 r^{3} / 3$ operations to divide $1 - \Gamma_{12} g_{21}$ by $g_{11}$ from the right,
		\end{itemize}
		\item $2 n_{i} r^{2}$ operations to multiply $B$ and $A$,
		\item $(52 / 3 + 4 (z_{i} - 1)) r^{3}$ operations to evaluate $\psi_{1}$ on $B A$ using \cref{eq:pade_approximant,eq:scaling_and_squaring},
		\begin{itemize}
			\item $12 r^{3}$ operations to form the powers $2^{-z} M$, \dots, $(2^{-z} M)^{6}$,
			\item $8 r^{3} / 3$ operations to evaluate the quotient,
			\item $8 r^{3} / 3$ operations to evaluate $\operatorname{mexp}(2^{-z} M)$,
			\item $2 (z - 1) r^{3}$ operations to form $\operatorname{mexp}(2^{-z + 1} M)$, \dots, $\operatorname{mexp}(2^{-1} M)$ by recursively using $\operatorname{mexp}(2 M) = \operatorname{mexp}(M) \operatorname{mexp}(M)$,
			\item $2 (z - 1) r^{3}$ operations to multiply the factors in \cref{eq:scaling_and_squaring},
		\end{itemize}
		\item no extra operations to build $A'$ or $B'$,
		\item $2 n_{i} (2 r)^{2}$ operations to multiply $B'$ and $A'$,
		\item $(52 / 3 + 4 (z_{i} - 1)) (2 r)^{3}$ operations to evaluate $\psi_{1}$ on $B' A'$,
		\item no extra operations to build the first term in \cref{eq:geodesics_efficient},
		\item $4 n_{i} r^{2} + 2 r^{3}$ operations to build the second term in \cref{eq:geodesics_efficient}:
		\begin{itemize}
			\item $2 n_{i} r^{2}$ operations to multiply $B$ with $\begin{bmatrix} g_{11}\\ g_{21} \end{bmatrix}$,
			\item $2 r^{3}$ operations to multiply $\psi_{1}(B A)$ with $B \begin{bmatrix} g_{11}\\ g_{21} \end{bmatrix}$,
			\item $2 n_{i} r^{2}$ operations to multiply with $A$ from the left,
		\end{itemize}
		\item $8 n_{i} r^{2} + 8 r^{3}$ operations to build the third term in \cref{eq:geodesics_efficient}:
		\begin{itemize}
			\item $2 n_{i} (2 r) r$ operations to multiply $B'$ with $\begin{bmatrix} g_{11}\\ g_{21} \end{bmatrix}$,
			\item $2 (2 r)^{2} r$ operations to multiply $\psi_{1}(B' A')$ with $B' \begin{bmatrix} g_{11}\\ g_{21} \end{bmatrix}$,
			\item $2 n_{i} (2 r) r$ operations to multiply with $A'$ from the left,
		\end{itemize}
		\item $6 n_{i} r^{2} + 6 r^{3}$ operations to build the last term in \cref{eq:geodesics_efficient}:
		\begin{itemize}
			\item $2 n_i r (2 r)$ operations to multiply $B$ with $A'$,
			\item $2 r (2 r) r$ operations to multiply $B A'$ with $\psi_{1}(B' A') B' \begin{bmatrix} g_{11}\\ g_{21} \end{bmatrix}$,
			\item $2 r^{3}$ operations to multiply with $\psi_{1}(B A)$ from the left,
			\item $2 n_{i} r^{2}$ operations to multiply with $A$ from the left.
		\end{itemize}
	\end{itemize}
\end{proof}

{\hfuzz=22pt
\printbibliography
}
\end{document}